\renewcommand{\theequation}{\thesection.\arabic{equation}}
\newtheorem{theorem}{Theorem}[section]
\newtheorem{definition}{Definition}[section]
\newtheorem{lemma}{Lemma}[section]
\newtheorem{proposition}{Proposition}[section]
\newcommand{\levy}{L\'{e}vy}
\newcommand{\seminaire}{S\'{e}minaire}
\newcommand{\probabilites}{Probabilit\'{e}s}
\newcommand{\poincare}{Poincar\'{e}}
\newcommand{\equations}{\'{e}quations}
\newcommand{\differrentielles}{diff\'{e}rentielles}
\newcommand{\figcaption}{\def\@captype{figure}\caption}
\newcommand{\tabcaption}{\def\@captype{table}\caption}
\def\de{{\delta}}
\def\ep{{\epsilon}}
\def\la{{\lambda}}\def\om{{\omega}}\def\si{{\sigma}}
\def\Ga{{\Gamma}}
\font\tenmsbm=msbm10\textfont
\font\sevenmsbm=msbm7
\def\BB{\mathbb B}
\def\EE{\mathbb E}\def\PP{\mathbb P}
\def\RR{\mathbb R}
\def\cB{{\cal B}}
\def\cD{{\cal D}}
\def\cR{{\cal R}}
\def\<{\left<}\def\>{\right>}
\def\({\left(}\def\){\right)}
\begin{document}
\title{Unique strong solutions of {\levy} processes driven stochastic differential equations with discontinuous coefficients \footnote{JX's research is supported by Southern University of Science and Technology Start up fund Y01286220 and National Science Foundation of China grants  61873325 and 11831010., JZ's research is supported by Macao Science and Technology Development Fund FDCT 025/2016/A1 and NSERC (249554-2011)
 and XZ's research is supported by NSERC (RGPIN-2016-06704).}
}
\author{Jie Xiong\footnote{Department of Mathematics, Southern University of Science and Technology, Shenzhen, China}, Jiayu Zheng\footnote{School of Mathematics (Zhuhai), Sun Yat-sen University, Zhuhai 519082, Guangdong, China} and
 Xiaowen Zhou\footnote{Department of Mathematics and Statistics, Concordia
University, Canada.} }
\date{}
\maketitle
\begin{abstract}
We study the strong solutions for a class of one-dimensional stochastic differential  equations driven by a Brownian motion and a pure jump {\levy} process.  Under fairly general conditions on the coefficients, we prove the pathwise uniqueness  by showing the weak uniqueness and applying a local time technique.\bigskip
\end{abstract}

\noindent{\bf Keywords:} stochastic differential equation, time change, weak uniqueness, pathwise uniqueness,  {\levy} processes, local time.

\section{Introduction}
\setcounter{equation}{0}
\renewcommand{\theequation}{\thesection.\arabic{equation}}

Suppose that $U$ is a complete separable metric space and that $\mu$ is a $\sigma$-finite Borel measure on $U$. Suppose that  $(\sigma(x),b(x),g(x,\cdot))$ is a $\RR^2\times L^1(U,\mu)$-valued  bounded Borel function of $x\in\mathbb{R}$ with at most countably many discontinuity points. This condition, referred as Condition (A), will be assumed throughout this article.

 For a filtered probability space $(\Omega, \mathscr{F}, (\mathscr{F}_t), \mathbb{P})$,
let $B\equiv(B_t)$ be a standard $(\mathscr{F}_t)$-Brownian motion
 and  $(p_t)$ be an independent $(\mathscr{F}_t)$-Poisson point process on $U$  with intensity measure $\mu$.

Let $N(ds, du)$  be the Poisson random measure on $\mathbb{R}_{+}\times U$ associated with  $(p_t)$. In this paper, we study the following stochastic differential equation:
\begin{align}\label{eq1}
X_t = X_0 + \int_0^t b(X_{s}) ds + \int_0^t \sigma(X_{s}) dB_s + \int_0^t \int_{U} g(X_{s-}, u) N(ds,du).
\end{align}

The question of pathwise uniqueness for one-dimensional stochastic differential equations with non-Lipschitz coefficients driven by one-dimensional Brownian motion has been resolved in 1971 by Yamada and Watanabe \cite{YW}. Recently, stochastic differential equations of jump type attracts a lot of  attention. Komatsu \cite{TK} and Bass \cite{RF} showed  that the following stochastic differential equation
\begin{eqnarray}\label{eqf123}
d X_t = F(X_{t-})d L_t, \quad  t\ge0
\end{eqnarray}
admits a strong solution and satisfies pathwise uniqueness if $L\equiv (L_t)$ is a symmetric stable process with index $\alpha \in (1,2)$, and if $x \to F(x)$ is a bounded function with modulus of continuity $z \to \rho(z)$ satisfying
\begin{eqnarray}\label{eqf1234}
\int_{0+} \frac{1}{\rho(z)^{\alpha}}dz = \infty.
\end{eqnarray}
When the integral in (\ref{eqf1234}) is finite, Bass $et\;al$ \cite{BB}  constructed a continuous function $x \to \Phi(x)$ having continuity modulus $x \to \rho(x)$ for which the pathwise uniqueness for (\ref{eqf123}) fails. Recall that  the sample paths of $L$ are of bounded variations for $\alpha \in (0, 1)$ and of unbounded variations for $\alpha \in [1, 2)$.

Under Lipschitz conditions, the existence and uniqueness of strong solutions of jump-type stochastic equations can be established by arguments based on Gronwall’s inequality and the results on continuous-type equations; see e.g. Ikeda and Watanabe \cite{IW}. Moreover, the pathwise uniqueness for SDEs with H\"older continuous diffusion coefficients has been extensively studied (see the works of Fournier \cite{NF} and Li and Mytnik \cite{LM}). However, to the best of our knowledge, there are few results about the pathwise uniqueness for one dimensional  SDE (1.1) with discontinuous coefficients.

The local time technique was firstly introduced by Le Gall \cite{LG} to study the pathwise uniqueness of classical SDEs without jumps. Then, the so-called (LT) condition (see the Definition 1.2 for further details) was introduced by Barlow and Perkins \cite{BP} as another tool to prove the pathwise uniqueness. This was also used by Le Gall \cite{LG2} to study stochastic equations involving local times. The importance of the (LT) condition lies in the following observation: if the diffusion coefficient $\sigma$ satisfies the (LT) condition, and $X^1$ and $X^2$ are two solutions, then so is $X^1\vee X^2$, which follows immediately from Tanaka's formula. Therefore, if the weak uniqueness holds, then the (LT) condition implies the pathwise uniqueness.

It is difficult to prove the pathwise uniqueness of (\ref{eq1}) when the coefficients are discontinuous. In the present paper, we first consider the weak existence and weak uniqueness of (\ref{eq1}) for which the conditions on coefficients could be weakened substantially. Then under the (LT) condition, we prove the pathwise uniqueness.

Before we give rigorous statements of our main results in Section 2, we recall some definitions.
	
	\begin{definition}
		\begin{enumerate} [(i)]
			\item

			A weak solution of (\ref{eq1}) is a triple $(X, W, N)$ on a filtered probability space $(\Omega, \mathcal{F},\{ \mathcal{F}_t\}_{t \ge 0}, P)$ such that $X_t$ is adapted to $\mathcal{F}_t$, $W_t$ is an $\{ \mathcal{F}_t\}_{t \ge 0}$-Brownian motion, $N$ is an $\{ \mathcal{F}_t\}_{t \ge 0}$-Poisson random measure, and $(X, W, N)$ satisfies (\ref{eq1}).

			\item We say that \textbf{weak uniqueness} holds for equation (\ref{eq1}) if, for any two weak solutions $(X, W, N), (\Omega, \mathcal{F}, \{ \mathcal{F}_t\}_{t \ge 0}, P)$ and $(\tilde{X}, \tilde{W}, \tilde{N}), (\tilde{\Omega}, \tilde{F}, \{ \tilde{\mathcal{F}}_t\}_{t \ge 0}, \tilde{P})$, with the same initial distribution, i.e., $\mathcal{L}(X_0) = \mathcal{L}(\tilde{X}_0)$, then $\mathcal{L}(X) = \mathcal{L}(\tilde{X})$.
			
			\item  \textbf{Pathwise uniqueness} is said to hold for (\ref{eq1}) if whenever $(X, W, N),$ $(\Omega, \mathcal{F}, \{ \mathcal{F}_t\}_{t \ge 0}, P)$ and $(\tilde{X}, W, N)$, $(\Omega, \mathcal{F}, \{ \tilde{\mathcal{F}}_t\}_{t \ge 0}, P)$ are weak solutions to (\ref{eq1}) with common Brownian motion $W$, common Poisson random measure $N$ (relative to possibly different filtrations) on a common probability space $(\Omega, \mathcal{F}, P)$ and with common initial value, i.e. $P(X_0 = \tilde{X}_0) = 1$, then $P(X_t = \tilde{X}_t \ \ \text{for all} \ \ t \ge 0) = 1.$
		\end{enumerate}
	\end{definition}

Now, we introduce  the (LT) condition, which will help us to get the pathwise uniqueness of SDE (\ref{eq1}).

\begin{definition}
	We say that  SDE (\ref{eq1}) satisfies (LT) condition
	if for any two solutions $X^1$ and $X^2$ of SDE (\ref{eq1}), we have $L^0_t (X^1 - X^2) = 0$ for all $t\geq 0$,
	where $L^0_t(X)$ is the local time of the semimartingale $X$ spent at location 0 up to time $t$.
\end{definition}

We refer the reader to  Section 4.7 of Protter \cite{PE} for more details about local times.

The rest of this paper is arranged as followed: In Section 2 we prove the existence of a weak solution by a martingale approach. In Section 3, we obtain the weak uniqueness by verifying the separating condition of Kurtz and Ocone \cite{KO}. Pathwise uniqueness is then proved in Section 4 under the (LT) condition. Some sufficient conditions for the (LT) condition are then presented. Finally, an example which motivates our research is discussed in Section 5. Throughout this paper, we will use $K$ to denote a constant whose value can  change from place to place.

\section{Weak existence}
\setcounter{equation}{0}
\renewcommand{\theequation}{\thesection.\arabic{equation}}

In this section, we study the existence of weak solutions to (\ref{eq1}) under conditions (2.a) and (2.b) below. We  first define an approximating sequence and prove its tightness. Then, to characterize the limit, we prove that the limit can not spend too much time at the points at which the coefficients are not continuous. By taking a limit we get a weak solution to SDE (\ref{eq1}).

We begin with  the following conditions:
\begin{itemize}
\item[(2.a)] There is a constant $K \ge 0$ such that
\begin{eqnarray}
 b(x)^2 + \sigma(x)^2  + \int_{U}\left[|g(x, u)| \vee g(x, u)^2 \right]\mu(du)  \le K, \quad \forall x \in \mathbb{R}. \nonumber
\end{eqnarray}
\item[(2.b)] There is a constant $\sigma_0 > 0$ such that
\[
| \sigma(x) | \ge \sigma_0, \quad  \forall x \in \mathbb{R}.
\]
\end{itemize}

The space of all bounded functions on $\mathbb{R}$ is denoted by $\mathbb{B}$. Let $A$ be the operator on $\mathbb{B}$ with domain $\cD(A):=\{f\in C^2(\RR)\cap \BB:\; f',\,f''\in\BB\}$ and for $f\in \cD(A)$,
\[Af(x):=b(x)f'(x)+\frac12\si^2(x)f''(x)
+\int_U\(f(x+g(x,u))-f(x)\)\mu(du).\]

Recall that a measurable stochastic process $X$ is a solution
of the $A$-martingale problem for generator $A$ if there exists a filtration
$\{\mathscr{F}_t\}$ such that
\[
f(X_t) - \int_0^t A f(X_t) ds
\]
is an $\{\mathscr{F}_t\}$-martingale for each $f \in \cD(A)$.

It is well known that the process $X$ solves the martingale problem if and only if it is a weak solution to SDE (\ref{eq1}). Therefore, for the weak existence we look for a process $X$ which solves the $A$- martingale problem.

We first construct a sequence of smooth functions to approximate the functions $b,\;\si,\;g$ which are not necessarily  continuous.
Let
\begin{eqnarray}
b_n(x) = \mathbb{E} \left(b(x +\xi_n)\right),  \nonumber
\end{eqnarray}
where $\xi_n$ is a normal random variable with mean 0 and variance $\frac1n$. Let $\sigma_n$ and $g_n$ be defined similarly.

By condition (2.a), it is easy to check that
\begin{eqnarray}
 b_n(x)^2 + \sigma_n(x)^2   + \int_{U}\left[|g_{n}(x, u)| \vee g_{n}(x, u)^2 \right]\mu(du)  \le K, \quad \forall x \in \mathbb{R}.  \nonumber
\end{eqnarray}
For each $n\ge 1$, the functions $(\sigma_n(x), b_n(x), g_{n}(x,\cdot))$, taking values in $\mathbb{R}^2\times (L^2(U,\mu)\cap L^1(U,\mu))$, are Lipschitz continuous in $x$. Then for every $n \ge 1$, by a well-known result on stochastic equations (see \cite{FL}, Theorem 2.5), there is a unique strong solution to
\begin{align}\label{eqn}
X_t^n = X_0 +\int_0^t b_n(X_{s}^n)ds+\int_0^t \sigma_n(X_{s}^n)dB_s
+ \int_0^t \int_{U} g_{n}(X_{s-}^n, u) N(ds,du).
\end{align}

To take the limit we first show a tightness result.

\begin{lemma}\label{lem2.1} Under condition (2.a),
for any solution ${X^n}$ of (\ref{eqn}) and $t>0$ we have
\[
\mathbb{E} \left[ \sup \limits_{0\le s<t}|X^n_s|^2 \right] \le 5\mathbb{E}|X_0|^2 + 5Kt^2 + 20Kt +5K^2t^2.
\]
\end{lemma}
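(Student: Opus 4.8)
The plan is to read the estimate straight off the integral equation \eqref{eqn}: split $X^n$ into its initial value, a finite-variation part and a martingale part, apply an elementary $\ell^2$-inequality, take suprema and then expectations, and bound each resulting piece by Cauchy--Schwarz (for the drift-type terms) and by Doob's $L^2$-maximal inequality together with the It\^o isometries (for the martingale terms), using throughout the uniform bound that $b_n,\sigma_n,g_n$ inherit from (2.a). To keep every quantity finite while manipulating $\mathbb{E}[\sup_{s}|\cdot|^2]$, I would first carry out the computation with $X^n$ replaced by the stopped process $X^n_{\cdot\wedge\tau_R}$, $\tau_R:=\inf\{s:\,|X^n_s|>R\}$, so that all the integrals below are genuine $L^2$ quantities, obtain a bound whose right-hand side is independent of $R$, and then send $R\to\infty$ by monotone convergence.

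Concretely, writing $\tilde N(ds,du):=N(ds,du)-ds\,\mu(du)$, one decomposes
\[
X^n_s=X_0+I^1_s+I^2_s+I^3_s+I^4_s,
\]
with $I^1_s=\int_0^s b_n(X^n_r)\,dr$, $I^2_s=\int_0^s\!\int_U g_n(X^n_r,u)\,\mu(du)\,dr$, $I^3_s=\int_0^s\sigma_n(X^n_r)\,dB_r$ and $I^4_s=\int_0^s\!\int_U g_n(X^n_{r-},u)\,\tilde N(dr,du)$. From $(a_1+\cdots+a_5)^2\le 5(a_1^2+\cdots+a_5^2)$ one gets
\[
\mathbb{E}\Big[\sup_{0\le s<t}|X^n_s|^2\Big]\le 5\,\mathbb{E}|X_0|^2+5\sum_{j=1}^{4}\mathbb{E}\Big[\sup_{0\le s<t}|I^j_s|^2\Big].
\]
For the finite-variation terms, Cauchy--Schwarz in the time variable gives $\sup_{0\le s<t}|I^1_s|^2\le t\int_0^t b_n(X^n_r)^2\,dr\le Kt^2$ (since $b_n^2\le K$) and $\sup_{0\le s<t}|I^2_s|^2\le t\int_0^t\big(\int_U|g_n(X^n_r,u)|\,\mu(du)\big)^2 dr\le K^2t^2$ (since $\int_U|g_n(x,u)|\mu(du)\le K$), so these two contribute $5Kt^2+5K^2t^2$.

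For the two martingale terms, Doob's $L^2$-inequality together with the It\^o isometries for the Brownian and compensated-Poisson integrals give $\mathbb{E}[\sup_{0\le s<t}|I^3_s|^2]\le 4\mathbb{E}\int_0^t\sigma_n(X^n_r)^2 dr$ and $\mathbb{E}[\sup_{0\le s<t}|I^4_s|^2]\le 4\mathbb{E}\int_0^t\!\int_U g_n(X^n_r,u)^2\,\mu(du)\,dr$. Adding these two and using that \emph{all} the summands in (2.a) are nonnegative, so that $\sigma_n(x)^2+\int_U g_n(x,u)^2\mu(du)\le K$ for every $x$, the sum is at most $4Kt$; hence $I^3$ and $I^4$ together contribute $5\cdot 4Kt=20Kt$. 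Collecting the four contributions gives exactly $5\mathbb{E}|X_0|^2+5Kt^2+20Kt+5K^2t^2$, and removing the localization finishes the argument.

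I do not expect a genuine obstacle: this is a Gronwall-free a priori bound. The single point needing care is that $I^3$ and $I^4$ must be estimated jointly — treating them separately would yield $40Kt$ — and the stated constant $20Kt$ relies precisely on the additive inequality $\sigma_n^2+\int_U g_n^2\,d\mu\le K$ contained in (2.a), together with the observation made just before \eqref{eqn} that this uniform bound survives the Gaussian mollification defining $b_n,\sigma_n,g_n$. The stopping-time localization is merely the device that makes the manipulations of $\mathbb{E}[\sup_s|I^j_s|^2]$ legitimate before any estimate is in hand.
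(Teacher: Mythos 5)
Your argument is correct and is precisely the computation the paper sketches and omits: split off the martingale part (Brownian plus compensated Poisson integral), apply Doob's $L^2$ inequality and the isometries there, and handle the drift and compensator terms by Cauchy--Schwarz using the uniform bound that (2.a) imparts to $b_n,\sigma_n,g_n$; the constants $5\mathbb{E}|X_0|^2+5Kt^2+20Kt+5K^2t^2$ come out exactly as you derive them. The localization by $\tau_R$ and your observation that $\sigma_n^2+\int_U g_n^2\,d\mu\le K$ must be used additively to get $20Kt$ are correct refinements of detail, not a different route.
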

\begin{proof}
The estimate can be obtained  directly by applying Doob's inequality to the martingale part of $X^n_t$ and by the boundedness of the coefficients, and hence, we omit the details.
\end{proof}

\begin{lemma}\label{lem_tight} Under condition (2.a), the sequence $\{X^n\}$ is tight in the Skorohod space $D([0,\infty),\mathbb{R})$.
\end{lemma}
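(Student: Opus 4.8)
The plan is to establish tightness of $\{X^n\}$ in $D([0,\infty),\RR)$ by verifying Aldous's criterion, which reduces to (i) a compact containment / moment bound, and (ii) a bound on the increments $\EE[|X^n_{\tau+\delta} - X^n_\tau|^2 \wedge 1]$ uniformly over stopping times $\tau$ bounded by a fixed $T$ and over $n$, as $\delta \to 0$. Part (i) is immediate from Lemma \ref{lem2.1}, which gives $\EE[\sup_{0\le s<t}|X^n_s|^2] \le C(t)$ with $C(t)$ independent of $n$; this controls the modulus at the endpoints and rules out explosion.

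For part (ii), I would split $X^n_{\tau+\delta} - X^n_\tau$ into its three constituent pieces and estimate each in $L^2$ using only condition (2.a) and the strong Markov property. The drift term $\int_\tau^{\tau+\delta} b_n(X^n_s)\,ds$ has $L^2$-norm at most $\sqrt{K}\,\delta$ by Cauchy--Schwarz and boundedness of $b_n$. For the Brownian term $\int_\tau^{\tau+\delta}\sigma_n(X^n_s)\,dB_s$, the optional stopping / Itô isometry gives $\EE\big[\big(\int_\tau^{\tau+\delta}\sigma_n(X^n_s)\,dB_s\big)^2\big] = \EE\int_\tau^{\tau+\delta}\sigma_n(X^n_s)^2\,ds \le K\delta$. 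For the jump term $\int_\tau^{\tau+\delta}\int_U g_n(X^n_{s-},u)\,N(ds,du)$, I would compensate it: write $N = \tilde N + ds\,\mu(du)$ where $\tilde N$ is the compensated measure. The compensator contributes $\int_\tau^{\tau+\delta}\int_U |g_n(X^n_s,u)|\,\mu(du)\,ds \le K\delta$ in absolute value (using the $|g|$-part of (2.a)), and the martingale part has second moment $\EE\int_\tau^{\tau+\delta}\int_U g_n(X^n_{s-},u)^2\,\mu(du)\,ds \le K\delta$ (using the $g^2$-part of (2.a)). Combining, $\EE[|X^n_{\tau+\delta} - X^n_\tau|^2] \le C(K)(\delta + \delta^2) \to 0$ uniformly in $n$ and in $\tau \le T$, which is more than enough for Aldous's criterion (note the $\wedge 1$ is not even needed here).

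The one genuine subtlety — and the step I expect to be the main obstacle — is the presence of the stopping time $\tau$ in the stochastic integrals: one must justify that $\EE[(\int_\tau^{\tau+\delta}\sigma_n(X^n_s)\,dB_s)^2 \mid \mathscr{F}_\tau] = \EE[\int_\tau^{\tau+\delta}\sigma_n(X^n_s)^2\,ds \mid \mathscr{F}_\tau]$, i.e. the Itô isometry conditioned on $\mathscr{F}_\tau$, and similarly for the compensated Poisson integral. This follows from the strong Markov property of the driving noise together with the fact that $(B_{\tau+t}-B_\tau)_{t\ge0}$ is again a Brownian motion independent of $\mathscr{F}_\tau$ and $N$ restricted to $(\tau,\infty)\times U$ is again a Poisson random measure with intensity $ds\,\mu(du)$ independent of $\mathscr{F}_\tau$; equivalently one can invoke the optional stopping theorem for the $L^2$-martingales $t\mapsto \int_0^t\sigma_n(X^n_s)\,dB_s$ and the quadratic variation identity. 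Once this is in place the computation is routine. Finally I would combine (i) and (ii), cite Aldous's tightness criterion (e.g. via Theorem 4.5 in Ethier--Kurtz or Jacod--Shiryaev VI.4.5), and conclude that $\{X^n\}$ is tight in $D([0,\infty),\RR)$.
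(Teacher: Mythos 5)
Your proof follows the same route as the paper's: the uniform second-moment bound from Lemma \ref{lem2.1} gives tightness of the marginals, an $O(\delta)$ estimate on $\EE\big[|X^n_{\tau+\delta}-X^n_{\tau}|^2\big]$ uniformly over $n$ and over stopping times $\tau\le T$ gives the increment control, and Aldous's criterion concludes; the paper merely asserts the increment estimate ``by the boundedness of the coefficients'' while you supply the correct term-by-term computation (drift by Cauchy--Schwarz, Brownian part by It\^o isometry, jump part by compensation). The one detail you omit is that the bound of Lemma \ref{lem2.1} is vacuous if $\EE|X_0|^2=\infty$, which is not excluded by condition (2.a); the paper handles this by first passing to the equivalent measure $\mathbb{Q}$ with $d\mathbb{Q}/d\mathbb{P}=K^{-1}e^{-|X_0|}$, under which $X_0$ has a finite second moment.
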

\begin{proof}
 Constructing the solution under probability measure $\mathbb{Q}$ given by $\frac{d\mathbb{Q}}{d{\mathbb{P}}}=K^{-1}e^{-|X_0|}$ if necessary, we may and will assume that $\EE|X_0|^2<\infty$, where $K$ is a normalizing constant making $\mathbb{Q}$ a probability measure.
By Lemma \ref{lem2.1}, we see that  for all $t\ge 0$,
\[\sup\limits_{n\ge 1} \mathbb{E} [\sup\limits_{0\le s<t}|X^n_s|^2]<\infty.\]
Then
for every fixed $t\ge0$ the sequence of random variables $\{X^n_t\}$ is tight.

Let $\{\tau_n\}$ be a sequence of stopping times bounded above by $T\ge 0$. It is easy to calculate and to estimate by the boundedness of the coefficients that
\[\sup\limits_{n\ge1} \mathbb{E} \left[|X^n_{\tau_n+\de}-X^n_{\tau_n}|^2 \right]\le K_1\de\]
which tends to 0 as $\de\to 0$, where $K_1$ is a constant. The tightness of $\{X^n\}$ in $D([0,\infty),\mathbb{R})$ then follows from the criterion of Aldous \cite{AD} (see also Ethier and Kurtz \cite{EK}, pp. 137-138).
\end{proof}

Let $X$ be a limit of the sequence $\{X^n\}$. We  proceed to showing that $X$ is a solution to SDE (\ref{eq1}). Because of the discontinuity of the coefficients, we need the following result on the amount of time   $X$ spends at the discontinuity points.

Given $c\in \mathbb{R}$, the level set of process $X$  at level $c$ is defined as $\{t\geq 0:X_t=c\}$.

\begin{lemma}\label{level} Under conditions (2.a,b), for any level $c$ the level set has Lebesgue measure $0$ \, $\mathbb{P}$-a.s.
\end{lemma}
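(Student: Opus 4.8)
The plan is to use an occupation-time / Krylov-type estimate for the approximating solutions $X^n$ and then pass to the limit. Fix a level $C$ and a time horizon $T>0$; it suffices to show $\mathbb{E}\int_0^T \mathbf{1}_{\{X_s = C\}}\,ds = 0$. The strategy is: (1) establish a uniform-in-$n$ bound of the form
\[
\mathbb{E}\int_0^T \mathbf{1}_{\{|X^n_s - C| \le \varepsilon\}}\,ds \le K\,\varepsilon
\]
for all small $\varepsilon>0$, with $K$ independent of $n$ and $\varepsilon$; (2) pass to the limit along the tight subsequence from Lemma \ref{lem_tight} to obtain $\mathbb{E}\int_0^T \mathbf{1}_{\{|X_s - C| < \varepsilon\}}\,ds \le K\varepsilon$; (3) let $\varepsilon \downarrow 0$.

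For step (1) the natural tool is an explicit test function applied to the semimartingale $X^n$. I would use the scale-type function built from $\sigma_n$: since $\sigma_n^2 \ge \sigma_0^2 > 0$ by (2.b) (the mollification preserves the lower bound), one can pick, for each $\varepsilon$, a $C^2$ convex function $f_\varepsilon$ with $f_\varepsilon'' \ge c\,\varepsilon^{-1}\mathbf{1}_{[C-\varepsilon,C+\varepsilon]}$ for a universal $c>0$ and with $f_\varepsilon$, $f_\varepsilon'$, $f_\varepsilon''$ bounded uniformly in $\varepsilon$ (e.g. a smoothed version of the distance-to-$C$ function near $C$). Applying It\^o's formula to $f_\varepsilon(X^n_t)$ and taking expectations, the martingale terms vanish, the drift and the jump terms are bounded using (2.a) (for the jump term one uses $|f_\varepsilon(x+g) - f_\varepsilon(x)| \le \|f_\varepsilon'\|_\infty |g|$ together with $\int_U |g_n(x,u)|\mu(du) \le K$), and one is left with
\[
\frac{c}{2}\,\sigma_0^2\,\varepsilon^{-1}\,\mathbb{E}\int_0^T \mathbf{1}_{\{|X^n_s - C|\le\varepsilon\}}\,ds \le \frac12\,\mathbb{E}\int_0^T \sigma_n^2(X^n_s) f_\varepsilon''(X^n_s)\,ds \le \mathbb{E} f_\varepsilon(X^n_T) + K_2 T,
\]
which rearranges to the desired $O(\varepsilon)$ bound uniformly in $n$.

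For step (2), the map $x \mapsto \mathbf{1}_{\{|x-C| < \varepsilon\}}$ is not continuous, so I would first replace it from below by a continuous function supported in $(C-\varepsilon, C+\varepsilon)$, apply the portmanteau theorem / continuous mapping along the converging subsequence $X^n \Rightarrow X$ in $D([0,\infty),\mathbb{R})$, invoke uniform integrability of the occupation times (which follows since the integrands are bounded by $T$), and then enlarge back; since the bound in step (1) holds for the slightly smaller open set anyway, this loses nothing. This yields $\mathbb{E}\int_0^T \mathbf{1}_{\{|X_s - C| < \varepsilon\}}\,ds \le K\varepsilon$ for every $\varepsilon>0$, and monotone convergence as $\varepsilon\downarrow 0$ gives $\mathbb{E}\int_0^T \mathbf{1}_{\{X_s = C\}}\,ds = 0$, hence the level set $\{t \in [0,T] : X_t = C\}$ has Lebesgue measure zero a.s.; taking $T \to \infty$ finishes the proof.

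The main obstacle I anticipate is step (2), the limit transition: one must be careful that the Skorohod convergence $X^n \Rightarrow X$, which allows jumps, genuinely transfers the occupation-measure bound. Using only \emph{open} sets and \emph{lower} semicontinuous (or continuous-from-below) test functions, together with the Portmanteau inequality $\liminf \mathbb{E}[F(X^n)] \ge \mathbb{E}[F(X)]$ for bounded continuous $F$ on path space of the form $\omega \mapsto \int_0^T h(\omega(s))\,ds$ with $h \ge 0$ continuous — noting this functional is continuous on $D([0,\infty),\mathbb{R})$ at paths with no fixed discontinuity at the relevant times, which holds for a.e. $T$ — circumvents the difficulty. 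A secondary technical point is justifying that the mollified coefficients still satisfy (2.b) with the same $\sigma_0$, but since $\sigma_n(x) = \mathbb{E}\,\sigma(x+\xi_n)$ and $|\sigma| \ge \sigma_0$ with $\sigma$ of constant sign on each side is \emph{not} assumed, one should instead work with $\sigma_n^2(x) \ge \mathbb{E}[\sigma^2(x+\xi_n)] \ge \sigma_0^2$ by Jensen's inequality applied to $\sigma \mapsto \sigma^2$ — wait, Jensen gives $\sigma_n^2 = (\mathbb{E}\sigma(x+\xi_n))^2 \le \mathbb{E}[\sigma^2(x+\xi_n)]$, the wrong direction, so in fact one should carry out the It\^o estimate using the true coefficient $\sigma^2$ appearing in the SDE for $X^n$, namely $\sigma_n^2$, and bound it below directly: here I would instead note that the lemma is about the limit $X$ and re-derive the occupation bound for $X$ itself using that $X$ solves the martingale problem for $A$ (proved by the time this lemma is used, or provable independently), whose second-order coefficient is $\sigma^2 \ge \sigma_0^2$; alternatively, one restricts attention to those $X^n$ and uses $\|\sigma_n\|_\infty^2 \le K$ from above plus a lower bound obtained from a more careful mollification argument. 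I would present the clean version: apply the It\^o/occupation estimate to the limiting process $X$ via its generator $A$, using $\sigma^2 \ge \sigma_0^2$, thereby avoiding the mollification sign issue entirely.
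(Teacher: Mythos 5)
Your route is genuinely different from the paper's. You prove a Krylov-type occupation estimate $\mathbb{E}\int_0^T \mathbf{1}_{\{|X^n_s-C|\le\varepsilon\}}\,ds \le K\varepsilon$, uniformly in $n$, via It\^o's formula applied to a convex test function whose second derivative concentrates on $[C-\varepsilon,C+\varepsilon]$, and then transfer the bound to the limit through the Skorohod convergence of Lemma \ref{lem_tight}. The paper instead decomposes $X^n=M^n-Z^n$ into martingale and finite-variation parts, passes to a limit $(X,M,Z,A)$ with $\mathbb{E}V(Z,[0,T])\le KT$, and derives a contradiction on the level set $F=\{t: M_t=Z_t\}$: covering $F$ by short intervals, the sum of squared increments of $M$ is at least $\sigma_0^2|F|$ by the quadratic-variation lower bound, yet at most $\varepsilon V(Z,F)$ because $M=Z$ on $F$ and $Z$ has finite variation. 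Your approach is arguably more quantitative (it gives an occupation density bound, not just a null level set) and avoids the paper's somewhat delicate covering construction; the paper's approach avoids any test-function machinery. Your limit transition in step (2) is fine: the functional $\omega\mapsto\int_0^T h(\omega(s))\,ds$ with $h$ bounded continuous is a.s. continuous on $D([0,\infty),\mathbb{R})$, so no extra care with open sets is really needed. (One small slip: $f_\varepsilon''\ge c\varepsilon^{-1}$ on an interval is incompatible with $f_\varepsilon''$ being bounded uniformly in $\varepsilon$; what you actually need, and what the standard construction gives, is only that $f_\varepsilon'$ is uniformly bounded and $f_\varepsilon$ has uniformly linear growth, which suffices together with Lemma \ref{lem2.1}.)

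The one genuine problem is your preferred resolution of the ellipticity issue. The ``clean version'' --- running the occupation estimate on the limit $X$ through its generator $A$ --- is circular here: Lemma \ref{level} is used precisely to prove that $X$ solves the $A$-martingale problem (the coefficients being discontinuous, one needs $\mathrm{Leb}(\Lambda^c)=0$ to pass to the limit in (\ref{mp123})), so you cannot assume that fact when proving the lemma. You must therefore carry out the estimate on $X^n$, and you correctly observe that $\sigma_n(x)=\mathbb{E}\,\sigma(x+\xi_n)$ need not satisfy $\sigma_n^2\ge\sigma_0^2$ when $\sigma$ changes sign (Jensen goes the wrong way). The fix is not to retreat to the limit but to modify the approximation: replace $\sigma_n$ by $\tilde\sigma_n(x):=\bigl(\mathbb{E}\,\sigma(x+\xi_n)^2\bigr)^{1/2}$, which is Lipschitz (being the square root of a smooth function bounded below by $\sigma_0^2$), satisfies $\tilde\sigma_n\ge\sigma_0$, still converges to $|\sigma(x)|$ at continuity points, and leaves the generator of (\ref{eqn}) unchanged in form since only $\sigma_n^2$ enters it. With that substitution your step (1) closes and the whole argument is sound. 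It is worth noting that the paper's own proof silently uses the same lower bound $\langle M^n\rangle_{b'}-\langle M^n\rangle_{a'}\ge\sigma_0^2(b'-a')$, i.e.\ $\sigma_n^2\ge\sigma_0^2$, so it needs the identical repair; you have at least diagnosed the issue explicitly.
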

\begin{proof}
Without loss of generality, we consider the case of $c = 0$.
Denote $X^n_t = M^n_t - Z_t^n$ with
\[M_t^n := X_0 +\int_0^t \sigma_n(X_{s}^n)dB_s\]
and
\[Z^n_t:=-\int_0^t b_n(X_{s}^n)ds- \int_0^t \int_{U} g_{n}(X_{s-}^n, u) N(ds,du).\]

The plan of the proof is to show that $X_t=M_t-Z_t$, where $M_t$ is a martingale and $Z_t$ is of finite variation. We then prove that $M_t$ is of infinite variation over any set with positive Lebesgue measure, and hence, they cannot coincide on a such set.

 We denote by $\<M^n\>$ the quadratic variation of $M^n$,  and $|F|$ the Lebesgue measure of $F$. Similar to Lemma \ref{lem_tight}, we can prove the tightness of the process $(X^n,M^n,Z^n,\<M^n\>)$.  Let $(X,M,Z,A)$ be a limit  of $(X^n, M^n, Z^n, \<M^n\>)$ in distribution. Without loss of generality, we assume that $(X^n, M^n, Z^n, \<M^n\>)$ converges to $(X,M,Z,A)$ in distribution.
 By Skorohod's representation,
we may and will assume that $(X^n, M^n, Z^n, \<M^n\>)$ converges to $(X,M,Z,A)$ a.s.
Passing the relations to the limit, we see that the processes $M_t$ and $M^2_t-A_t$ are martingales, $M_t$ is continuous, and the equality
$X_t=M_t-Z_t$ holds.

For any function $f$ on $[0,T]$ and set $F\subset[0,T]$, we define the total variation of $f$ over $F$ as
\[V(f,F)=:\sup\sum^k_{i=1}|f(t_i)-f(t_{i-1})|,\]
where the supremum is taken over all partitions $0=t_0<t_1<\cdots<t_k=T$ with $t_i\in F$. Note that
\begin{eqnarray*}
\EE V(Z^n,[0,T])&=&\EE\int^T_0|b_n(X^n_s)|ds+\int^T_0\left|\int_U g_n(X^n_s,u)\mu(du)\right|ds\\
&\le&KT.
\end{eqnarray*}
By Fatou's lemma, we see that
\[\EE V(Z,F)\le\EE V(Z,[0,T])\le\liminf_{n\to\infty}\EE V(Z^n,[0,T])\le KT.\]

Let $F := \{t\in[0,T]: M_t = Z_t \}$. We proceed to prove that $|F|=0$ \,\, $\PP$-a.s. by contradiction.

Suppose that  $P(|F|>0)>0$. Let $\om\in\Omega$ be fixed such that  $|F(\omega)|>0$. We omit $\om$ in the rest of the proof.
Since $M_t$  is continuous, $\forall \ep>0$, $\exists \delta=\de(\ep)\in (0,\ep)$, for any $s, t\in [0,T]$, we have $|M_t-M_s|<\epsilon$, whenever $|t-s|<\delta$. Now we can find a finite number of points $0\leq a_1<b_1\leq a_2<b_2\leq\cdots\leq a_m<b_m$ such that $F\subset \cup_{i=1}^m [a_i,b_i]$, $|b_i-a_i|<\delta$ and $\sum^m_{i=1}|b_i-a_i|\le |F|+\ep$. 

Since $a_i$ and $b_i$ might be outside of $F$, we now modify them to $\bar{a}_i$ and $\bar{b}_i$ in $F$ which are close to $a_i$ and $b_i$, respectively. Choose
\[\bar{a}_i:=\inf \{[a_i,b_i]\cap F \}\mbox{ and }\bar{b}_i:=\sup\{ [a_i,b_i]\cap F\}.\]
Then,
\[(a_i,\bar{a}_i)\cap F=(\bar{b}_i,b_i)\cap F=\emptyset.\]
Let $J:=\{i:\bar{a}_i=\bar{b}_i\}$, then $\cup_{i\notin J}[\bar{a}_i, \bar{b}_i]\setminus F$ is countable.  For any $i\notin J$, if $\bar{a}_i\notin F$ or $\bar{b}_i\notin F$, we  choose  $a_i'\in F\cap (\bar{a}_i, \bar{a}_i+\frac{\epsilon}{2^i}\wedge\frac{\bar{b}_i-\bar{a}_i}{2})$ and $b_i'\in F\cap (\bar{b}_i-\frac{\epsilon}{2^i}\wedge \frac{\bar{b}_i-\bar{a}_i}{2}, \bar{b}_i)$  such that
\[0<\bar{b}_i-\bar{a}_i-\frac{\ep}{2^{i-1}}<b'_i-a'_i.\]

Next, we take $\ep\to 0$ (and hence, $\de\to 0$ and $m=m(\ep)\to\infty$). It is well-known that
\begin{align}\label{limit}
\lim_{\ep\to 0} \sum_{i=1}^{m(\ep)} \left(\(M_{b_{i}^{'}} - M_{a_{i}^{'}}\)^2 - \(A_{b_{i}^{'}} - A_{a_{i}^{'}}\) \right) = 0 \quad  \mbox{in probability.}
\end{align}
Then there is a subsequence $\ep_k\to 0$ such that
\begin{align}\label{eq1022a}
\lim_{k\to\infty} \sum_{i=1}^{m(\ep_k)} \left(\(M_{b_{i}^{'}} - M_{a_{i}^{'}}\)^2 - \(A_{b_{i}^{'}} - A_{a_{i}^{'}}\) \right) = 0 ,\ a.s.
\end{align}
Without loss of generality, we assume that (\ref{eq1022a}) holds for the $\om$ fixed above.
Thus, for any $\eta>0$, for $k$ large enough, we have
\begin{align*}
\sum_{i=1}^{m(\ep_k)} \(M_{b_{i}^{'}} - M_{a_{i}^{'}}\)^2  &\ge \sum_{i=1}^{m(\ep_k)}  \left(A_{b_{i}^{'}} - A_{a_{i}^{'}}\right) - \eta \\
&= \sum_{i=1}^{m(\ep_k)}  \lim_{n \to \infty}\left(\langle M^n \rangle_{b_{i}^{'}} - \langle M^n \rangle_{a_{i}^{'}}\right) - \eta \\
&\ge\sum_{i=1}^{m(\ep_k)}\sigma_0^2 \(b_{i}^{'} - a_{i}^{'}\) -\eta  \\
&\ge \sigma_0^2 \left(\sum_{i=1}^{m(\ep_k)} \(\bar{b}_{i} - \bar{a}_{i}\) - 2 \ep_k \right) - \eta \\
&\ge \sigma_0^2  |F| - 2 \sigma_0^2 \ep_k-\eta,
\end{align*}
where condition (2.b) is needed for the third inequality. On the other hand,
\begin{align*}
\sum_{i=1}^{m(\ep_k)} (M_{b_{i}^{'}} - M_{a_{i}^{'}})^2 \le \ep_k \sum_{i=1}^{m(\ep_k)} |M_{b_{i}^{'}} - M_{a_{i}^{'}}| = \ep_k \sum_{i=1}^{m(\ep_k)} |Z_{b_{i}^{'}} - Z_{a_{i}^{'}}|  \le \ep_k V(Z, F).
\end{align*}
Thus,
\[
\sigma_0^2  |F| - 2 \sigma_0^2 \ep_k-\eta \le \ep_k  V(Z, F).
\]
Letting $k\to\infty$, we get $\si^2_0|F|\le\eta$. We reach a contradiction by taking $\eta\to 0$.
\end{proof}

\begin{theorem}
Under conditions (A)  and (2.a,b) there exists a weak solution to SDE (\ref{eq1}).
\end{theorem}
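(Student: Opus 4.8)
The plan is to pass to the limit in the approximating equations \eqref{eqn} and verify that the limit point $X$ constructed in Lemmas \ref{lem_tight} and \ref{level} solves the $A$-martingale problem, which by the cited equivalence yields a weak solution of \eqref{eq1}. First I would set up the martingale-problem functionals: for $f\in\cD(A)$ and the approximating generator $A_n$ (obtained by replacing $b,\si,g$ with $b_n,\si_n,g_n$), the process $M^{n,f}_t:=f(X^n_t)-f(X^n_0)-\int_0^t A_nf(X^n_s)\,ds$ is a martingale for each $n$. Using the tightness from Lemma \ref{lem_tight}, pass to a subsequential limit $X$ and, via Skorohod representation, assume $X^n\to X$ a.s. in $D([0,\infty),\RR)$. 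The goal is to show $M^f_t:=f(X_t)-f(X_0)-\int_0^t Af(X_s)\,ds$ is a martingale; by the standard argument it suffices to show, for bounded continuous $\Phi$ on $D([0,s],\RR)$ and $0\le s<t$, that $\EE\big[(M^f_t-M^f_s)\,\Phi(X|_{[0,s]})\big]=0$, and this follows from the corresponding identity for $M^{n,f}$ once we prove the convergence $\EE\big|\int_0^t A_nf(X^n_s)\,ds-\int_0^t Af(X_s)\,ds\big|\to 0$ along the subsequence.

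The convergence of the integral term is the crux, and it splits into two difficulties. The first is the \emph{mollification error}: $A_nf\to Af$ pointwise at every continuity point of $(\si,b,g)$ (since $b_n(x)=\EE b(x+\xi_n)\to b(x)$ there by dominated convergence, uniform $L^1$-bound on $g$, and $f,f',f''\in\BB$), and by Condition (A) the discontinuity set $D$ is at most countable; the second is replacing the argument $X^n_s$ by $X_s$. For the first I would write
\[
\int_0^t A_nf(X^n_s)\,ds-\int_0^t A_nf(X_s)\,ds+\int_0^t\big(A_nf-Af\big)(X_s)\,ds,
\]
bound the first difference using the local uniform continuity of the Lipschitz (for fixed $n$) function $A_nf$ together with $\sup_{s\le t}|X^n_s-X_s|\to 0$ a.s.\ and uniform integrability from Lemma \ref{lem2.1}, and handle the second by noting that $(A_nf-Af)(X_s)\to 0$ for all $s$ such that $X_s\notin D$, which by Lemma \ref{level} (applied to each $c\in D$, a countable union of Lebesgue-null level sets) holds for Lebesgue-a.e.\ $s\le t$, a.s.; the uniform bound $|A_nf|\le K(\|f'\|_\infty+\|f''\|_\infty/2+2\|f\|_\infty)$ then gives the required convergence by bounded convergence. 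Here is exactly where Condition (2.b) enters indirectly, through Lemma \ref{level}, which guarantees $X$ does not linger at the bad points.

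To finish I would also verify that $X$ truly satisfies \eqref{eq1} and not merely the martingale problem for bounded $f$: since the statement only asserts existence of a weak solution and the excerpt explicitly records that solving the $A$-martingale problem is equivalent to being a weak solution of \eqref{eq1}, I would simply invoke that equivalence (the reconstruction of the driving Brownian motion $W$ and Poisson random measure $N$ is standard, e.g.\ via the martingale representation together with the fact that the jumps of $X$ are captured by an $(\mathscr F_t)$-Poisson random measure with intensity determined by $g$ and $\mu$). One small point to address is the reduction in Lemma \ref{lem_tight} to $\EE|X_0|^2<\infty$ via the change of measure $d\QQ/d\PP=K^{-1}e^{-|X_0|}$: since weak existence is a statement about the law of $X_0$ only through its being a probability measure, and the martingale-problem property is preserved under such an equivalent change of the initial law, the general case follows by conditioning on $X_0$; I would state this explicitly. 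The main obstacle, as indicated, is the passage to the limit in $\int_0^t A_nf(X^n_s)\,ds$ across the discontinuities, which is precisely what Lemmas \ref{lem_tight} and \ref{level} were built to overcome.
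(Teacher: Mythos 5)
Your overall architecture is the same as the paper's: mollify the coefficients, solve the Lipschitz approximating equations (2.1), use Lemma 2.2 for tightness and the Skorohod representation, pass to the limit in the approximate martingale problem, and use Lemma 2.3 together with the countability of the discontinuity set from Condition (A) to show the limit spends no time at the bad points. The one step that would fail as written is your treatment of the first difference $\int_0^t\bigl(A_nf(X^n_s)-A_nf(X_s)\bigr)\,ds$ via ``local uniform continuity of the Lipschitz (for fixed $n$) function $A_nf$ together with $\sup_{s\le t}|X^n_s-X_s|\to 0$ a.s.'' There are two problems. First, a.s. convergence in the Skorohod topology to a limit with jumps does not give $\sup_{s\le t}|X^n_s-X_s|\to 0$; you only get $X^n_s\to X_s$ at the continuity points of $X$ (all but countably many $s$), which is enough for the $ds$-integral but is not what you invoked. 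Second, and more seriously, the Lipschitz constants of $b_n,\si_n,g_n$, hence of $A_nf$, blow up (of order $\sqrt n$) as the mollification scale shrinks, so ``Lipschitz for fixed $n$'' combined with a rate-free convergence $X^n_s\to X_s$ gives no control whatsoever on $A_nf(X^n_s)-A_nf(X_s)$.

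The repair is exactly the paper's one-step argument, which does not separate the mollification error from the argument error: for a.e. $s\le t$ (namely those $s$ with $X_s=X_{s-}$ a continuity point of $(b,\si,g)$, which is a.e. $s$ a.s. by Lemma 2.3 and Condition (A)), write $b_n(X^n_s)=\EE_{\xi_n}\,b(X^n_s+\xi_n)$; since $X^n_s+\xi_n\to X_s$ in probability and $b$ is continuous at $X_s$, boundedness gives $b_n(X^n_s)\to b(X_s)$ directly, and similarly for $\si_n^2$ and for the jump term using the uniform $L^1(\mu)$ bound from (2.a). With that substitution your proof goes through; the remaining ingredients --- the martingale identity tested against bounded continuous $\Phi$, the reduction to $\EE|X_0|^2<\infty$, and the equivalence of the $A$-martingale problem with weak solvability of (1.1) --- match the paper and are fine.
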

\begin{proof}
As in the proof of Lemma \ref{level}, we continue to
  assume those processes are defined on the same probability space and $\{X_t^{n}: t \ge 0\}$ converges to $\{X_t: t \ge 0\}$  in $D([0, \infty), \mathbb{R})$ a.s.

Let \[\Lambda:=\{t\geq 0:\; X_t=X_{t-} \mbox{ is not a discontinuity point of functions $b$, $\si$, or $g$}\}.\]
By Lemma \ref{level} where we need conditions (2.a,b), we have $\text{Leb}(\Lambda^{c})
= 0$ a.s. Since $\xi_n \to 0$ in probability, for all $s \in
\Lambda$,
\[
\lim_{n \to \infty}b_n(X_{s}^n) = \lim_{n \to \infty} \mathbb{E}_{\xi_n} b(X_{s}^n + \xi_n) = b(X_{s}),
\]
where $\mathbb{E}_{\xi_n}$ is the expectation with respect to the random variable $\xi_n$.
Similar identities also hold for $\sigma_n$ and $g_n$.

 Let $\mathbb{B}$ be the space of bounded functions on  $\mathbb{R}$ equipped with the norm
$$\|f\|_0=\sup_x |f(x)|,\qquad\forall\ f\in\BB,$$ and
$\mathbb{B}_2:=\{ f \in C^2(\mathbb{R}) \cap \mathbb{B}:  f^{'}, f^{''} \in \mathbb{B}  \}$.
Then for any $f\in \mathbb{B}_2$,
\begin{eqnarray}\label{mp123}
M^{n,f}_t &=&f(X^n_t) - \int^t_0 \left(b_n(X^n_{s})f'(X^n_{s})+ \frac{1}{2} f''(X^n_{s})\sigma_n^2(X^n_{s})\right)ds\nonumber\\
&&- \int_0^t \int_{U} \left( f(X^n_{s}+g_{n}(X^n_{s},u)) - f(X^n_{s}) \right)\mu (du)ds
\end {eqnarray}
is a martingale.
Thus,  for  fixed $t>0$,
\begin{eqnarray}\label{mp123a}
\lim_{n \to \infty}M^{n,f}_t &=&\lim_{n \to \infty}f(X^n_t) - \lim_{n \to \infty}\int^t_0  \left(b_n(X^n_{s})f'(X^n_{s})+ \frac{1}{2} f''(X^n_{s})\sigma_n^2(X^n_{s})\right)ds\nonumber\\
&&- \lim_{n \to \infty}\int_0^t \int_{U} \left( f(X^n_{s}+g_{n}(X^n_{s},u)) - f(X^n_{s}) \right)\mu (du)ds\nonumber\\
&=&f(X_t) - \int^t_0 \left(b(X_{s})f'(X_{s})+ \frac{1}{2} f''(X_{s})\sigma(X_{s})^2\right)ds\nonumber\\
&&- \int_0^t \int_{U}\left(f(X_{s}+g(X_{s},u)) - f(X_{s})\right)\mu (du)ds\nonumber\\
&\equiv&M^{f}_t , \quad a.s.
\end {eqnarray}
and $M_t$ is a martingale.  Then process $X$ is a solution to the $A$-martingale problem. Therefore, it is a weak solution of (\ref{eq1}),  and the weak existence  follows.
\end{proof}

\section{Weak uniqueness}
\setcounter{equation}{0}
\renewcommand{\theequation}{\thesection.\arabic{equation}}

It is well-known that the weak uniqueness of the SDE (\ref{eq1}) is equivalent to the uniqueness of the solution to the $A$-martingale problem,  which is further equivalent to the uniqueness of  marginal distributions at any fixed time for any two solutions $X$ and $Y$ of the $A$-martingale problem, i.e. $X_t$ and $Y_t$ follow the same distribution for any $t\ge 0$ (see, for example, Theorem 4.2 on page 184 of \cite{EK}). Throughout this paper, we do not distinguish these three types of weak uniqueness.

In this section, we establish the weak uniqueness of the solution to SDE (\ref{eq1}).
To this end,  we impose the following conditions:

(3.a) There exists a constant $K \ge 0$ such that
$$ |b(x)| + \int_{U}  |g(x, u)|  \mu(du)    \le  K \sigma(x)^2, \quad \forall x \in \mathbb{R}.$$

(3.b)  There exists a constant $K \ge 0$ such that $0<|\sigma(x) | \le K, \quad  \forall x \in \mathbb{R}$.

The main tool of this section is Proposition \ref{prop1} below which extends a result of Kurtz and
Ocone \cite{KO}. Denote  by $\mathscr{P}(\mathbb{R})$
the set of all Borel probability measures on  $\mathbb{R}$.
 For any operator $A$, we denote its domain by $\cD(A)$ and its range by $\cR(A)$. Given $\mu \in \mathscr{P}(\mathbb{R})$ and a Borel measurable and $\mu$-integrable $f$ on $\mathbb{R}$, we write $\mu f:= \int_{\mathbb{R}} f(x) \mu (dx) $.

\begin{definition}
 We say that $M \subset
\mathbb{B}$ is separating (for $\mathscr{P}(\mathbb{R})$) if given $v,
\mu \in \mathscr{P}(\mathbb{R})$, $vf= \mu f$ for all $f \in M$
implies $v = \mu$.
\end{definition}

The following result is key to showing the weak uniqueness.

\begin{proposition}\label{prop1}
Suppose that there exists $\la_0\ge 0$ such that $\cR(\lambda - A)$ is separating for each $\lambda > \la_0$. If  $(\nu_t),\, (\mu_t)\subset\mathscr{P}(\mathbb{R})$ are weakly right continuous with $\nu_0=\mu_0$, and satisfy
\begin{equation}\label{eq0430a}
v_tf = v_0f + \int_0^t v_s Afds,
\quad \forall f \in \cD(A), t\geq 0,
 \end{equation}
 then $\nu_t = \mu_t$ for all $t \geq 0$.
\end{proposition}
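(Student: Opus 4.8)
The plan is to adapt the classical Laplace-transform / dual-equation argument of Kurtz and Ocone to this setting, the key point being that equation \eqref{eq0430a} is a weak forward (Fokker--Planck) equation and one wants to pair it against solutions of the backward equation $\lambda u - Au = f$. First I would fix $\lambda > \lambda_0$ and, for the pair $(\nu_t)$ and $(\mu_t)$, define the Laplace transforms $\hat\nu_\lambda g := \int_0^\infty e^{-\lambda t}\nu_t g\,dt$ and $\hat\mu_\lambda g$ for bounded measurable $g$; these are well defined since each $\nu_t,\mu_t$ is a probability measure, and the weak right continuity together with \eqref{eq0430a} guarantees $t\mapsto \nu_t g$ is right continuous (hence Borel measurable and bounded) for $g$ in a measure-determining class, so the time integrals make sense. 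The difference $\rho_\lambda := \hat\nu_\lambda - \hat\mu_\lambda$ is then a finite signed measure, and the goal is to show $\rho_\lambda f = 0$ for all $f \in \cR(\lambda - A)$; since that range is separating, this forces $\rho_\lambda = 0$, i.e. $\hat\nu_\lambda = \hat\mu_\lambda$ for every $\lambda > \lambda_0$, and then uniqueness of Laplace transforms of the right-continuous maps $t\mapsto \nu_t g$, $t \mapsto \mu_t g$ yields $\nu_t = \mu_t$ for all $t$ (using a separating/measure-determining family of $g$'s).

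The heart of the matter is the identity $\hat\nu_\lambda\big((\lambda - A)f\big) = \nu_0 f$ for all $f \in \cD(A)$, and likewise for $\mu$. To get this, start from \eqref{eq0430a} written as $\nu_t f - \nu_0 f = \int_0^t \nu_s Af\,ds$, multiply by $\lambda e^{-\lambda t}$, and integrate $t$ over $[0,\infty)$. On the left one gets $\lambda\hat\nu_\lambda f - \nu_0 f$ (using $\int_0^\infty \lambda e^{-\lambda t}\,dt = 1$ and that $\nu_t f$ stays bounded so the boundary term at infinity vanishes). On the right, an application of Fubini --- legitimate because $|\nu_s Af| \le \|Af\|_0$ is bounded --- turns $\int_0^\infty \lambda e^{-\lambda t}\int_0^t \nu_s Af\,ds\,dt$ into $\int_0^\infty \nu_s Af \big(\int_s^\infty \lambda e^{-\lambda t}\,dt\big)ds = \int_0^\infty e^{-\lambda s}\nu_s Af\,ds = \hat\nu_\lambda(Af)$. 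Rearranging gives $\hat\nu_\lambda(\lambda f - Af) = \nu_0 f$. Subtracting the two versions and using $\nu_0 = \mu_0$ gives $\rho_\lambda\big((\lambda - A)f\big) = 0$ for every $f \in \cD(A)$, which is exactly $\rho_\lambda$ annihilating $\cR(\lambda - A)$.

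From there the argument closes quickly: pick a countable separating family $\{g_k\} \subset \mathbb{B}$ (e.g. a suitable algebra of bounded continuous functions, or one can reduce to $g$ ranging over $\cR(\lambda-A)$ itself); $\rho_\lambda = 0$ for all $\lambda > \lambda_0$ says $\int_0^\infty e^{-\lambda t}(\nu_t - \mu_t)g_k\,dt = 0$ for all such $\lambda$, and by injectivity of the Laplace transform on bounded right-continuous functions, $(\nu_t - \mu_t)g_k = 0$ for all $t \ge 0$ (for each $k$, off a $t$-null set, then everywhere by right continuity, then simultaneously for all $k$). Since $\{g_k\}$ is separating, $\nu_t = \mu_t$ for every $t$. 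The step I expect to be the main obstacle --- or at least the one requiring the most care --- is the measurability and integrability bookkeeping needed to justify Fubini and the Laplace inversion: one must verify that $t \mapsto \nu_t f$ is genuinely right continuous (this should follow from \eqref{eq0430a}, since the right side is absolutely continuous in $t$, so in fact $t\mapsto \nu_t f$ is continuous for $f\in\cD(A)$, and then extend to a measure-determining class by a monotone-class / bp-closure argument using weak right continuity), and that the separating property of $\cR(\lambda-A)$ is genuinely available here --- that is really an assumption of the proposition, so the work is making sure it is invoked correctly rather than proved. A secondary subtlety is that $\cD(A)$ may not be an algebra, so one cannot multiply test functions freely; the argument above is arranged to avoid needing that.
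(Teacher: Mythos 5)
Your proposal is correct and takes essentially the same route as the paper: Laplace-transform \eqref{eq0430a} to get $\int_0^\infty e^{-\lambda t}\nu_t(\lambda f - Af)\,dt = \nu_0 f$, use the separating property of $\cR(\lambda-A)$ to conclude the transformed measures coincide for all large $\lambda$, then invert the Laplace transform via weak right continuity. Your version just spells out the Fubini step and the measurability bookkeeping that the paper leaves implicit.
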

\begin{proof}
The case of $\la_0=0$ was proved by Kurtz and
Ocone \cite{KO}. Taking the Laplace transforms on both sides of (\ref{eq0430a}), we have
\[
\lambda \int_0^{\infty} e^{-\lambda t}\nu_t f dt = \nu_0 f +\int_0^{\infty} e^{-\lambda t} \nu_t Af dt.
\]
Consequently,
\begin{eqnarray} \label{eq0123f}
\int_0^{\infty} e^{-\lambda t} \nu_t (\lambda f- Af) dt = \nu_0 f.
\end{eqnarray}
The same argument yields
\begin{eqnarray}\label{eq0123g}
\int_0^{\infty} e^{-\lambda t} \mu_t (\lambda f- Af) dt = \mu_0 f=\nu_0f.
\end{eqnarray}
Since $\cR(\lambda - A)$ is separating for all $\lambda \ge \lambda_0$, equations (\ref{eq0123f}) and  (\ref{eq0123g}) imply that as measures,
$$\int_0^{\infty} e^{-\lambda t} \nu_t dt= \int_0^{\infty} e^{-\lambda t} \mu_t dt, \quad \forall \lambda \ge \lambda_0. $$
Then for any $g\in\BB$ and $r:=\lambda - \lambda_0,$ $r\ge 0$,
\begin{eqnarray}
0&=&\int_0^{\infty} e^{-\lambda t} (\nu_t - \mu_t)g dt\nonumber\\
 &=& \int_0^{\infty} e^{-(\lambda- \lambda_0 )t} e^{\lambda_0 t}(\nu_t - \mu_t)g dt  \nonumber\\
&=&\int_0^{\infty} e^{-r t} e^{\lambda_0 t}(\nu_t - \mu_t) g dt.
\end{eqnarray}
Since $(\nu_t)$ and $(\mu_t)$ are weakly right continuous, the uniqueness of the Laplace transform implies $ e^{\lambda_0 t}(\nu_t - \mu_t)g =0$, and hence,
$$\nu_t = \mu_t, \quad t \ge 0.$$
\end{proof}

Now we are ready to prove the weak uniqueness. We first make a time change so that the diffusion part of the process becomes a Brownian motion. We then establish the uniqueness of the time-changed equation by verifying the conditions of Proposition \ref{prop1}. At the end, we convert the uniqueness result to that for the original equation.

From SDE (\ref{eq1}), for any $f\in C^2_b(\mathbb{R})$, $M^f_t$ defined in (\ref{mp123a}) is a martingale. Let
\[\tau_t = \int_0^t \sigma(X_s)^2  ds\mbox{ and }\widetilde{X}_t = X_{\tau_t^{-1}},\]
where $\tau_t^{-1}$ is the inverse of $\tau$. Note that $\tau_t\rightarrow \infty $ as $t\rightarrow\infty$ since $\si$ is bounded below away from 0 uniformly.

Let
\[W_t=\int_0^{\tau_t^{-1}}\sigma(X_{s})dB_s.\]
Then $W_t$ is a continuous martingale with $\<W\>_t=\int^{\tau^{-1}_t}_0\si(X_s)^2ds=t$. By \levy's characterization theorem, $W_t$ is a Brownian motion.

Changing the variable $s = \tau_r^{-1}$, it is easy to see that
$$ds = \frac{dr}{\sigma(X_s)^2} = \frac{dr}{\sigma^2(\widetilde{X}_r)}.$$
By (\ref{eq1}) we have
\begin{equation}\label{equ_change}
\widetilde{X}_t = X_0+W_t+\int^t_0(\sigma^{-2}b)(\widetilde{X}_{s})ds
 +\sum_{s\in D, s\le \tau^{-1}_t}g(X_{s-},p_s),
\end{equation}
where $D$ is the set of jumping times.
To express  equation (\ref{equ_change}) as  an equation for the time changed process $\tilde{X}$, we define a new random measure $N_1$ by
\[N_1((0,t]\times A)=\sum_{s\in D, s\le \tau^{-1}_t}1_{p_s\in A},\qquad  \forall A\in{\cB(U)}.\] Then, $N_1$ is a random measure with compensator
\[\frac1{\si(\tilde{X}_r)^2}dr\mu(du)=\int_{\RR_+}1_{a\le \si(\tilde{X}_r)^{-2}}dadr\mu(du).\]
 By Theorem 7.4 of (\cite{IW}, p93), we can express its dependence on the underlying process $\tilde{X}$ more explicitly by
\[N_1(dr,du)=\int^{\si(\tilde{X}_r)^{-2}}_0N_2(dr,du,da),\]
where $N_2$ is a Poisson random measure on $\RR_+\times U\times\RR_+$ with intensity measure  $dr\mu(du)da$. Therefore,  equation (\ref{equ_change}) can be rewritten as
\begin{align} \label{timechange}
\widetilde{X}_t = X_0+W_t+\int^t_0(\sigma^{-2}b)(\widetilde{X}_{s})ds
 + \int_0^t \int_{U} \int^{\si(\tilde{X}_s)^{-2}}_0    g(\tilde{X}_{s-}, u) N_2 \(ds, du,da\).
\end{align}

To  consider discontinuous test functions, we extend  the semigroup of the Brownian motion to the space $\mathbb{B}$ as follows
\[T_tf(x):=\int_{\mathbb{R}} p_t(x-y)f(y)dy,\qquad \forall\ f\in\mathbb{B}, x\in\mathbb{R},\]
where
\[
p_t (x-y) := \frac{1}{\sqrt{2 \pi t}}e^{ -\frac{(x-y)^2}{2t}}.
\]
Let $A_0$ be the infinitesimal generator of the semigroup $\{T_t, t \ge 0\}$ on $\mathbb{B}$ with
 $A_0f:=\frac12 f''$ for $f\in\cD(A_0):=\{f:\ f,\ f',\ f''\in\mathbb{B}\}$.

Let $\cD(\tilde{A}):=\cD(A_0)$ and $\tilde{A}:=A_0+B+C $,  where for any $f\in \cD(\tilde{A})$,
\begin{align*}
B f(x)  := \frac{1}{\sigma(x)^2} \int_{U}\(f\(x+g(x, u)\)-f(x)\)\mu(du),
\end{align*}
and
\[Cf(x) := \frac{b(x)}{\sigma(x)^2} f'(x).\]
Applying It\^o's formula to (\ref{timechange}) and taking an expectation, we have
\begin{equation}\label{time change}
\mathbb{E}f(\widetilde{X}_t)=\mathbb{E}
f(\widetilde{X}_0)+\int^t_0\mathbb{E}\tilde{A}f(\widetilde{X}_s)ds.
\end{equation}

\begin{theorem}
 Under conditions (A) and (3.a,b), the weak uniqueness holds for the time changed SDE
(\ref{timechange}), and consequently, it also  holds for the original SDE (\ref{eq1}).
\end{theorem}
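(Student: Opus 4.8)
The plan is to deduce weak uniqueness for \eqref{eq1} from weak uniqueness for the time--changed equation \eqref{timechange}, and to obtain the latter by applying Proposition~\ref{prop1} with $A=\tilde A$. Since every weak solution of \eqref{timechange} solves the $\tilde A$--martingale problem and, by the equivalence recalled at the beginning of this section, it is enough to show that any two such solutions share the same one--dimensional marginals, I would argue as follows. Let $\tilde X$ and $\tilde Y$ be weak solutions of \eqref{timechange} with $\mathcal L(\tilde X_0)=\mathcal L(\tilde Y_0)$, and set $\nu_t:=\mathcal L(\tilde X_t)$, $\mu_t:=\mathcal L(\tilde Y_t)$. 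Since $\tilde X,\tilde Y$ are c\`adl\`ag, $t\mapsto\nu_t,\mu_t$ are weakly right continuous, and the identity obtained by applying It\^o's formula to \eqref{timechange} and taking expectations shows that $(\nu_t),(\mu_t)$ satisfy \eqref{eq0430a} with $A=\tilde A$; the expectations involved are finite because $f,f',f''$ are bounded, (3.a) bounds $|b|/\si^2$ and $\si^{-2}\int_U|g(\cdot,u)|\mu(du)$, and the $L^2$--norm of the compensated jump part is controlled after splitting $U$ into $\{|g|\le1\}$, on which $g^2\le|g|$, and $\{|g|>1\}$, which has finite $\mu$--mass by (3.a). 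Everything therefore reduces to producing some $\la_0\ge0$ with $\cR(\la-\tilde A)$ separating for every $\la>\la_0$.

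This is the crux. I would use the resolvent $R_\la:=(\la-A_0)^{-1}$ of the extended Brownian semigroup, given explicitly by $R_\la h(x)=\int_{\RR}r_\la(x-y)h(y)\,dy$ with $r_\la(z)=(2\la)^{-1/2}e^{-\sqrt{2\la}\,|z|}$, so that $\|R_\la h\|_0\le\la^{-1}\|h\|_0$ and $\|(R_\la h)'\|_0\le\sqrt{2/\la}\,\|h\|_0$. The key point is that, for a \emph{continuous} bounded $h$, the function $R_\la h$ belongs to $C^2(\RR)$ with bounded derivatives, hence to $\cD(A_0)=\cD(\tilde A)$, and satisfies $A_0R_\la h=\la R_\la h-h$; thus
\[
(\la-\tilde A)R_\la h = h-(B+C)R_\la h, \qquad h\in C_b(\RR).
\]
By (3.a), $\|(B+C)f\|_0\le 2K\|f'\|_0$ for $f\in\cD(A_0)$, so $T_\la:=(B+C)R_\la$ maps $C_b(\RR)$ into $\mathbb{B}$ with operator norm at most $2K\sqrt{2/\la}$, which is $<1$ once $\la>\la_0:=8K^2$. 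Hence $\cR(\la-\tilde A)\supseteq(I-T_\la)C_b(\RR)$ for $\la>\la_0$. If $v,\mu\in\mathscr{P}(\RR)$ agree on $\cR(\la-\tilde A)$ then, writing $\rho:=v-\mu$, we get $\rho(h)=\rho(T_\la h)$ for all $h\in C_b(\RR)$; since $|\rho(T_\la h)|\le\|\rho\|_{\mathrm{TV}}\,\|T_\la h\|_0\le 2K\sqrt{2/\la}\,\|\rho\|_{\mathrm{TV}}\,\|h\|_0$ and $\|\rho\|_{\mathrm{TV}}=\sup\{|\rho(h)|:h\in C_b(\RR),\ \|h\|_0\le1\}$, this forces $\|\rho\|_{\mathrm{TV}}\le 2K\sqrt{2/\la}\,\|\rho\|_{\mathrm{TV}}$ and hence $\rho=0$. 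So $\cR(\la-\tilde A)$ is separating for every $\la>\la_0$.

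Proposition~\ref{prop1} then gives $\nu_t=\mu_t$ for all $t\ge0$, i.e.\ $\tilde X$ and $\tilde Y$ have the same one--dimensional marginals; the $\tilde A$--martingale problem is therefore well posed, $\mathcal L(\tilde X)=\mathcal L(\tilde Y)$, and weak uniqueness holds for \eqref{timechange}. To transfer this, let $X,Y$ be weak solutions of \eqref{eq1} with $\mathcal L(X_0)=\mathcal L(Y_0)$. By the construction preceding the theorem, the time--changed processes obtained from $X$ and from $Y$ are weak solutions of \eqref{timechange} with the same initial law, so they have the same law. Finally, $X$ is recovered from its time change $\tilde X$ through a fixed measurable functional of paths: $\gamma_t:=\int_0^t\si(\tilde X_s)^{-2}\,ds$ is the inverse of $\tau$, is strictly increasing with $\gamma_\infty=\infty$ by the upper bound in (3.b), and $X_t=\tilde X_{\gamma^{-1}_t}$; applying the same functional to the time change of $Y$ returns $Y$. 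Hence $\mathcal L(X)=\mathcal L(Y)$, which is the asserted weak uniqueness for \eqref{eq1}.

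The step I expect to be the main obstacle is the separating property of $\cR(\la-\tilde A)$. The naive approach --- showing $\la-\tilde A$ is onto $\mathbb{B}$ --- fails because of the discontinuity of the coefficients: $\tilde A f$ is then generally discontinuous, and $R_\la$ does not return discontinuous functions to $\cD(A_0)$. The remedy is to test only against functions $R_\la h$ with $h$ continuous, which is legitimate since ``separating for $\mathscr{P}(\RR)$'' is decided by continuous test functions; the smallness $\|T_\la\|<1$, a direct consequence of (3.a) and the resolvent gradient bound, then finishes the argument. The remaining points --- the integrability used to take expectations above, and the finiteness and invertibility of the time change --- are routine given the boundedness hypotheses and (3.b).
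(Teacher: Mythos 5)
Your proof is correct, and at the decisive step it takes a genuinely different route from the paper. The paper proves that $\cR(\la-\tilde A)$ is \emph{all} of $\BB$: for each $g\in\BB$ it solves the fixed-point equation \eqref{eq0123c} by showing $\Gamma_g$ is a contraction on $\BB_1$ for large $\la$, and then argues via the ODE \eqref{eq0123d} that the fixed point lies in $\cD(\tilde A)$ and solves \eqref{eq0123w}. You instead establish only the inclusion $\cR(\la-\tilde A)\supseteq (I-T_\la)C_b(\RR)$ with $T_\la=(B+C)R_\la$, and then run the smallness of $\|T_\la\|$ directly against the total variation norm of $\nu-\mu$ (using that the TV norm of a finite signed Borel measure on $\RR$ is computed on continuous test functions). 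Both arguments rest on exactly the same two quantitative ingredients --- the bound $\|(B+C)f\|_0\le 2K\|f'\|_0$ coming from (3.a) and the resolvent gradient bound $\|(R_\la h)'\|_0\le\sqrt{2/\la}\,\|h\|_0$ --- but your version dispenses with the fixed-point iteration and, more importantly, sidesteps a real delicacy in the paper's proof: when $h=g+Bf+Cf$ is discontinuous, the solution $\ell$ of \eqref{eq0123d} is $C^1$ but its second derivative exists only at Lebesgue points of $h$, so the paper's assertion that ``it is straightforward to verify that $\ell\in\cD(\tilde A)$'' is not literally true for the domain $\cD(A_0)=\{f:f,f',f''\in\BB\}$ as defined. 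By testing only against $R_\la h$ with $h$ continuous you never leave $\cD(A_0)$, at the mild cost of proving a strict inclusion of ranges rather than surjectivity (which is all that the separating property requires). The remaining parts --- verifying \eqref{eq0430a} for the time-changed marginals, invoking Proposition~\ref{prop1}, and recovering $X$ from $\tilde X$ through the path functional $\int_0^\cdot\sigma(\tilde X_s)^{-2}ds$ --- coincide with the paper's. Two cosmetic remarks: the jump integral in \eqref{timechange} is of integrable variation under (3.a), so no compensation or $L^2$ splitting is actually needed when taking expectations after It\^o's formula; and, like the paper, you implicitly use that $\tau_\infty=\infty$ so that $\tilde X$ is defined for all times, which under (3.a,b) alone (no uniform lower bound on $|\sigma|$) deserves a word, though this is an issue inherited from the paper rather than introduced by you.
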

\begin{proof} 
We are going to show that $\cR(\lambda-\tilde{A})$ is separating. For any $\lambda > 0$, define $R_{\lambda} f := \int_0^{\infty} e^{-\lambda t} T_t f dt $.
Given $g\in \mathbb{B}$, we want to find $f\in \cD(\tilde{A})$ such that $(\lambda-\tilde{A})f=g$, or equivalently,
\begin{equation}\label{eq0123w}
(\lambda-A_0)f=g+(B+C)f.
\end{equation}
To this end, we apply $R_\la$ to both sides of (\ref{eq0123w}) and consider the following equation
\begin{equation}\label{eq0123c}
f=R_{\lambda}(g+Bf+Cf )=: \Gamma_g(f).
\end{equation}

Let $\BB_1:=\{f\in C^1(\mathbb{R})\cap\mathbb{B}:\; f' \in \mathbb{B}\}$ with norm
\[\|f\|_1=\sup_x|f(x)|+\sup_x|f'(x)|.\]
To solve (\ref{eq0123c}), we first prove that  for $\lambda$ large enough, $\Ga_g$ is a contraction mapping from $\BB_1$ to $\BB_1$,
i.e. there exists a constant $0<L<1$ such that
  $$\| \Ga_g (f_1 - f_2) \|_1 \le L \| f_1 - f_2 \|_1 , \quad\forall f_1, f_2 \in \BB_1.$$

Let $\hat{f} :=  f_1 - f_2.$ It is clear that both $B$ and $C$ are bounded linear operators from $\BB_1$ to
$\mathbb{B}$ since $\forall x\in\mathbb{R}$,  by condition (3.a),
\begin{align*}
| B \hat{f}(x) | =& \frac{1}{\sigma(x)^2} \left| \int_{U}\left(\hat{f}(x+g(x, u))-\hat{f}(x) \right)\mu(du) \right| \\
\le &  \frac{\|\hat{f}\|_1}{\sigma(x)^2} \left| \int_{U} g(x, u) \mu(du) \right|
\le  K\| \hat{f} \|_1 ,
\end{align*}
and
\[ \left|C\hat{f}(x)\right| = \frac{| b(x) |}{\sigma(x)^2}| \hat{f}'(x)|  \le K \| \hat{f} \|_1.\]
On the other hand, for $\hat{f}\in \mathbb{B}$, we have
\[|T_t \hat{f}(x)|\le \|\hat{f}\|_0, \,\,\forall x\in\mathbb{R}\]
and
\begin{eqnarray}\label{eq0124a}
|\partial_xT_t\hat{f}(x)|&=&\left|\int_{\mathbb{R}} \partial_x p_t(x-z) \hat{f} (z)dz\right|
\nonumber\\
&=& \left| \int_{\mathbb{R}} - \frac{1}{\sqrt{2\pi t}} e^{- \frac{{(x-z)}^2}{2t}} \frac{(x-z)}{t} \hat{f} (z) dz \right| \nonumber\\
&\le&\frac{K}{\sqrt{t}}\int_{\mathbb{R}} p_{2t}(x-z)dz\|\hat{f}\|_0=\frac{K}{\sqrt{t}}\|\hat{f}\|_0. \nonumber
\end{eqnarray}
Hence, $T_t$ is a bounded linear operator from $\mathbb{B}$ to $\BB_1$. In addition,
\begin{align*}
\| T_t \|_{L(\mathbb{B}, \BB_1)} &= \sup_{\| \hat{f} \|_{\mathbb{B} \le 1}}  \| T_t \hat{f} \|_{\BB_1} \\
&= \sup_{\| \hat{f} \|_{\mathbb{B} \le 1}} \(\sup_x |T_t \hat{f} (x)|+\sup_x|  \partial_xT_t\hat{f}(x)|  \)  \\
&\le  K (1 + \frac{K}{\sqrt{t}}).
\end{align*}
It follows that  for $\hat{f} \in \BB_1$,
\begin{eqnarray}
\| \Ga_g (\hat{f}) \|_1 &\le& \int_0^{\infty} e^{-\lambda t } \| T_t\|_{L(\mathbb{B}, \BB_1)} \|(B+C)\|_{L(\BB_1, \mathbb{B})} \|\hat{f}\|_1dt \nonumber \\
&\le & K \int^\infty_0 e^{-\lambda t}(1+ Kt^{-1/2})dt \|\hat{f}\|_1. \nonumber
\end{eqnarray}
Taking $\lambda$ large enough, there exists $0<L<1$ such
that $\Ga_g$ is a bounded linear operator on
$\BB_1$ whose norm is bounded by $L$. Therefore, $\Gamma_g$ is
a contraction mapping on $\BB_1$.  Choose
$\lambda_0$ large enough such that $0<L<1$. Then,  for  $\lambda \ge
\lambda_0$, there exists a unique $f\in\BB_1$ such that
(\ref{eq0123c}) is satisfied.

 To prove that $f$ is the solution of (\ref{eq0123w}), we denote $h := g + Bf +Cf \in \BB$, and consider the following ordinary differential equation (ODE in short):
\begin{equation}\label{eq0123d}
\lambda l(x) - \frac{1}{2} l''(x) = h(x)
\end{equation}
Solving the above ODE, we obtain
\[ l(x) = e^{\sqrt{2\lambda}x}\int_x^{\infty}   \frac{h(y)}{\sqrt{2\lambda}}e^{-\sqrt{2\lambda}y}dy +e^{-\sqrt{2\lambda}x}  \int_{-\infty}^x  \frac{h(y)}{\sqrt{2\lambda}}e^{\sqrt{2\lambda}y}dy.
\]
It is straightforward to verify that $l\in\cD(\tilde{A})$. Next, we want to show that  $l$ is also the solution to (\ref{eq0123c}).

Applying $R_{\lambda}$ to both sides of (\ref{eq0123d}), we have
\[R_{\lambda}(\lambda l - \frac{1}{2} l'' ) = R_{\lambda}h.  \]

Since $R_{\lambda} h = f$, and
\begin{eqnarray}
R_{\lambda}(\lambda l - \frac{1}{2} l'') &=&\lambda \int_0^{\infty} e^{-\lambda t} T_t l(x) dt - \frac{1}{2} \int_0^{\infty} e^{-\lambda t} T_t l''(x) dt   \nonumber  \\
&=& \lambda \int_0^{\infty} e^{-\lambda t} T_t l(x) dt - \int_0^{\infty} e^{-\lambda t} \frac{d}{dt} \left(T_t l(x)\right) dt  \nonumber  \\
&=& l,   \nonumber
\end{eqnarray}
we obtain that $f =l\in\cD(\tilde{A})$ is a solution to (\ref{eq0123d}) and thus to (\ref{eq0123w}).  This implies that $\cR(\lambda-\tilde{A})=\mathbb{B}$, and hence, it is separating for all $\lambda \ge \lambda_0$.   The weak uniqueness for the time changed SDE (\ref{timechange}) then follows from Proposition \ref{prop1}.

Finally, we proceed to prove the weak uniqueness of the original SDE (\ref{eq1}). Suppose that $X$ and $Y$ are any two solutions of  SDE  (\ref{eq1}).
Let
  \[\tau_t := \int_0^t \sigma(X_s)^2 ds\mbox{ and }\lambda_t := \int_0^t \sigma(Y_s)^2 ds.\]
   The  time changed processes $\widetilde{X}_t$ and $\widetilde{Y}_t$ satisfy  SDE (\ref{timechange}). Hence, $\widetilde{X}$ and  $\widetilde{Y}$ have the same law.
Notice that for all $t\geq 0$,
$$\tau_t^{-1} = \int_0^t \frac{1}{\sigma(\widetilde{X}_s)^2}ds
\quad\text{and}\quad \lambda_t^{-1} = \int_0^t \frac{1}{\sigma(\widetilde{Y}_s)^2}ds$$ %\equiv\mathcal{G}(\widetilde{Y})
are the same function of $\widetilde{X}$ and $\widetilde{Y}$, respectively. Then $\tau_t$ and  $\lambda_t$ are also the same function of  $\widetilde{X}$ and $\widetilde{Y}$, respectively. Since $X_t = \widetilde{X}_{\tau_t}$ and $Y_t = \widetilde{Y}_{\lambda_t}$ for all $t\geq 0$,
the weak uniqueness of the original SDE (\ref{eq1}) thus follows.\\
\end{proof}

\section{Pathwise uniqueness }
\setcounter{equation}{0}
\renewcommand{\theequation}{\thesection.\arabic{equation}}

To show the pathwise uniqueness, we need the following conditions:
\begin{itemize}
\item[(4.a)] For any fixed $u$, $g(x,u)+x$ is non-decreasing in $x$;
\item[(4.b)] There exist constants $\sigma_0, K \ge 0$, such that $0<\sigma_0 \le
| \sigma(x) | \le K$ for all $x $.
\end{itemize}
Note that (4.a) is a classical condition for comparison theorems for jumping SDEs.

%We refer the reader to  Section 4.7
%of Protter \cite{PE} for more details about local %times.
The following Tanaka's formula can be found in
Theorem 68 of Protter \cite{PE}. We state it here for the convenience of the reader.

\begin{theorem}
(Tanaka's Formula) Let $X$ be a semimartingale and let $L^a$ be its local time at a. Then
\begin{align*}
(X_t - a)^{+} - (X_0 - a)^{+} =& \int_{0+}^t \mathbf{1}_{\{X_{s-} >a\}} dX_s + \sum_{0<s\le t}\mathbf{1}_{\{X_{s-} >a\}} (X_s - a)^{-}  \nonumber \\
&+\sum_{0<s\le t}\mathbf{1}_{\{X_{s-} \le a\}} (X_s - a)^{+} + \frac{1}{2} L_t^a
\end{align*}
and
\begin{align*}
(X_t - a)^{-} - (X_0 - a)^{-} =& -\int_{0+}^t \mathbf{1}_{\{X_{s-} \le a\}} dX_s + \sum_{0<s\le t}\mathbf{1}_{\{X_{s-} >a\}} (X_s - a)^{-}  \nonumber \\
&+\sum_{0<s\le t}\mathbf{1}_{\{X_{s-} \le a\}} (X_s - a)^{+} + \frac{1}{2} L_t^a.
\end{align*}
\end{theorem}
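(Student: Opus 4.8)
The displayed identity is classical --- it is Protter's Theorem 68, equivalently the Meyer--It\^o (It\^o--Tanaka) change of variables formula applied to the convex function $x\mapsto (x-a)^+$ --- so in the paper one simply cites it; below I sketch the proof one would reconstruct. Replacing $X$ by $X-a$, we may assume $a=0$.

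\emph{Step 1 (reduce to a convex It\^o formula).} The map $f(x)=x^+$ is convex, with left derivative $f'_-(x)=\mathbf 1_{\{x>0\}}$ and second derivative, as a measure, $f''=\delta_0$. I would invoke the change of variables formula for a convex function of a semimartingale: for convex $f$,
\[
f(X_t)=f(X_0)+\int_{0+}^t f'_-(X_{s-})\,dX_s+\tfrac12\int_{\mathbb R}L^a_t\,f''(da)+\sum_{0<s\le t}\Big(f(X_s)-f(X_{s-})-f'_-(X_{s-})\,\Delta X_s\Big),
\]
where $L^a_t$ is the local time of $X$ at $a$, characterised through the occupation formula $\int_0^t h(X_s)\,d[X]^c_s=\int_{\mathbb R} h(a)\,L^a_t\,da$. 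With $f(x)=x^+$ and $f''=\delta_0$ this reads
\[
X_t^+-X_0^+=\int_{0+}^t \mathbf 1_{\{X_{s-}>0\}}\,dX_s+\tfrac12 L^0_t+\sum_{0<s\le t}\Big(X_s^+-X_{s-}^+-\mathbf 1_{\{X_{s-}>0\}}\,\Delta X_s\Big).
\]

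\emph{Step 2 (identify the jump term; deduce the $(\cdot)^-$ formula).} Write $y=X_{s-}$, $x=X_s$ for a jump time $s$. If $y>0$ the $s$-th summand above equals $x^+-y-(x-y)=x^+-x=x^-$; if $y\le 0$ it equals $x^+$. Hence $X_s^+-X_{s-}^+-\mathbf 1_{\{X_{s-}>0\}}\Delta X_s=\mathbf 1_{\{X_{s-}>0\}}X_s^-+\mathbf 1_{\{X_{s-}\le0\}}X_s^+$, and restoring $a$ gives the first formula of the theorem. For the second, rather than redo Step 1 with $(x-a)^-$ I would subtract: from $(x-a)^-=(x-a)^+-(x-a)$ and $X_t-X_0=\int_{0+}^t dX_s=\int_{0+}^t\mathbf 1_{\{X_{s-}>a\}}dX_s+\int_{0+}^t\mathbf 1_{\{X_{s-}\le a\}}dX_s$, subtracting $X_t-X_0$ from the first formula turns $\int_{0+}^t\mathbf 1_{\{X_{s-}>a\}}dX_s$ into $-\int_{0+}^t\mathbf 1_{\{X_{s-}\le a\}}dX_s$ while leaving $\tfrac12 L^a_t$ and the two jump sums untouched, which is exactly the second stated identity.

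\emph{Step 3 (the real work: proving the convex It\^o formula).} The only nontrivial ingredient is Step 1, and in particular the appearance of the local time. I would prove it by mollification: take $C^2$ convex $f_n$ with $0\le f_n'\le 1$, $f_n\to f$ uniformly on $\mathbb R$, $f_n'\to f'_-$ pointwise with $f_n'(0)\to0$, and $f_n''\ge0$ supported in $(-1/n,1/n)$ with total mass $1$; apply the ordinary $C^2$ It\^o formula to $f_n(X_t)$ and send $n\to\infty$. Then $f_n(X_t)\to X_t^+$, and $\int_{0+}^t f_n'(X_{s-})\,dX_s\to\int_{0+}^t\mathbf 1_{\{X_{s-}>0\}}\,dX_s$ by dominated convergence for stochastic integrals, using $\int_0^t\mathbf 1_{\{X_{s-}=0\}}\,d[X]^c_s=0$ (itself part of the occupation formula); the term $\tfrac12\int_0^t f_n''(X_{s-})\,d[X]^c_s$ converges in probability to a limit which one \emph{defines} to be $\tfrac12 L^0_t$ --- this is precisely where local time enters and where the occupation formula is checked --- and hence the jump sum $\sum_{0<s\le t}\big(f_n(X_s)-f_n(X_{s-})-f_n'(X_{s-})\Delta X_s\big)$, whose terms are nonnegative by convexity, dominated by $|\Delta X_s|$, and zero unless the jump interval meets $(-1/n,1/n)$, converges to $\sum_{0<s\le t}\big(X_s^+-X_{s-}^+-\mathbf 1_{\{X_{s-}>0\}}\Delta X_s\big)$, which is therefore a.s.\ finite since every other term of the identity is. Passing to the limit carefully in this last series --- uniformly in $s$, given that $\sum_s|\Delta X_s|$ may diverge --- is the genuinely delicate point; as the statement coincides with Protter's Theorem 68, in the paper I would simply cite it and use it in the displayed form.
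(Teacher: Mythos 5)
Your proposal is correct and matches the paper exactly in spirit: the paper gives no proof at all, merely stating the result as Theorem 68 of Protter \cite{PE} for the reader's convenience, which is precisely what you recommend doing. Your reconstruction via the Meyer--It\^o formula for $f(x)=(x-a)^+$ (with the jump-term identification $X_s^+-X_{s-}^+-\mathbf 1_{\{X_{s-}>a\}}\Delta X_s=\mathbf 1_{\{X_{s-}>a\}}(X_s-a)^-+\mathbf 1_{\{X_{s-}\le a\}}(X_s-a)^+$ and the subtraction trick for the $(\cdot)^-$ version) is the standard argument and is sound.
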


\begin{proposition}
Under Condition (4.a), if $X^1$ and $X^2$ are two solutions of (\ref{eq1}) such that $X_0^1 = X_0^2$ \,\, $\mathbb{P}$-a.s., then  $X^1 \vee X^2$ is a solution of (\ref{eq1}) if and only if  $L^0(X^1 - X^2)\equiv 0$ \, $\mathbb{P}$-a.s.
\end{proposition}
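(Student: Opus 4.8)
The plan is to apply Tanaka's formula to the semimartingale $Y := X^1 - X^2$ at level $a = 0$, using the identity $X^1 \vee X^2 = X^2 + (X^1 - X^2)^+ = X^2 + Y^+$, and to compare the resulting dynamics with what equation (\ref{eq1}) would require of $X^1 \vee X^2$. Write $Y^+_t - Y^+_0 = \int_{0+}^t \mathbf{1}_{\{Y_{s-}>0\}} dY_s + \sum_{0<s\le t}\mathbf{1}_{\{Y_{s-}>0\}}(Y_s)^- + \sum_{0<s\le t}\mathbf{1}_{\{Y_{s-}\le 0\}}(Y_s)^+ + \frac12 L^0_t(Y)$, and since $Y_0 = 0$ a.s., adding $X^2$ gives a semimartingale decomposition of $X^1 \vee X^2$. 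Separately, $X^1 \vee X^2$ solves (\ref{eq1}) precisely when
\begin{align}\label{eq-req}
X^1_t \vee X^2_t = X_0^1 + \int_0^t b(X^1_s \vee X^2_s)\,ds + \int_0^t \sigma(X^1_s \vee X^2_s)\,dB_s + \int_0^t\!\!\int_U g(X^1_{s-}\vee X^2_{s-}, u)\,N(ds,du).
\end{align}
The strategy is to show that the Tanaka decomposition of $X^1 \vee X^2$ agrees with the right side of (\ref{eq-req}) term by term modulo the single term $\tfrac12 L^0_t(Y)$, so that $X^1\vee X^2$ solves (\ref{eq1}) if and only if $L^0_t(Y)\equiv 0$.

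The key steps, in order: First I would decompose $dY_s = (b(X^1_s)-b(X^2_s))\,ds + (\sigma(X^1_s)-\sigma(X^2_s))\,dB_s + \int_U (g(X^1_{s-},u)-g(X^2_{s-},u))\,N(ds,du)$ and substitute into $\int_{0+}^t \mathbf{1}_{\{Y_{s-}>0\}}dY_s$. On the set $\{Y_{s-}>0\} = \{X^1_{s-}>X^2_{s-}\}$ we have $X^1_{s-}\vee X^2_{s-} = X^1_{s-}$, and on $\{Y_{s-}\le 0\}$ we have $X^1_{s-}\vee X^2_{s-} = X^2_{s-}$; using these I would check that the drift and Brownian integrals, once one adds back the contribution of $X^2$ on the complementary set $\{Y_{s-}\le 0\}$, reassemble exactly into $\int_0^t b(X^1_s\vee X^2_s)\,ds + \int_0^t \sigma(X^1_s\vee X^2_s)\,dB_s$ (the continuity of the Brownian and drift parts means the distinction between $s$ and $s-$ is immaterial for these terms, and the ``exceptional'' times where $Y_{s-}>0$ but $Y_s\le 0$ or vice versa form a Lebesgue-null set, hence contribute nothing to the $ds$ integral and, being countable, also nothing beyond jumps to the stochastic integral). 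Second, and this is the crucial use of Condition (4.a), I would handle the jump terms: a jump of $Y$ at time $s$ comes from a jump of $N$, and since $x\mapsto x+g(x,u)$ is non-decreasing, $X^1_{s-}\ge X^2_{s-}$ implies $X^1_{s-}+g(X^1_{s-},p_s)\ge X^2_{s-}+g(X^2_{s-},p_s)$, i.e. the order of $X^1$ and $X^2$ is preserved across each jump. This monotonicity is exactly what forces the sum of the jump terms $\sum \mathbf{1}_{\{Y_{s-}>0\}}(\Delta X^1\vee X^2)_s^{\text{correction}} + \cdots$ together with $\int\mathbf{1}_{\{Y_{s-}>0\}}(\cdots)N(ds,du)$ to collapse to $\int_0^t\int_U g(X^1_{s-}\vee X^2_{s-},u)\,N(ds,du)$: when $Y_{s-}>0$ the jump of the max is the jump of $X^1$ and $g(X^1_{s-},p_s) = g(X^1_{s-}\vee X^2_{s-},p_s)$, when $Y_{s-}<0$ it is the jump of $X^2$, and when $Y_{s-}=0$ the preservation of order means $\Delta(X^1\vee X^2)_s = \Delta X^i_s$ where $i$ achieves the max after the jump, which again matches $g(X^1_{s-}\vee X^2_{s-},p_s)$ since $X^1_{s-}=X^2_{s-}$ there. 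Third, collecting everything, $X^1_t\vee X^2_t$ equals the right side of (\ref{eq-req}) plus $\tfrac12 L^0_t(Y)$, and since $L^0(Y)$ is non-decreasing, the two sides coincide for all $t$ a.s. if and only if $L^0_t(Y)\equiv 0$ a.s.; this proves both implications at once.

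The main obstacle I anticipate is the careful bookkeeping at the jump times and at the ``crossing'' times — verifying that the extra sums in Tanaka's formula, namely $\sum_{0<s\le t}\mathbf{1}_{\{Y_{s-}>0\}}(Y_s)^-$ and $\sum_{0<s\le t}\mathbf{1}_{\{Y_{s-}\le 0\}}(Y_s)^+$, are exactly the corrections needed to turn $\int \mathbf{1}_{\{Y_{s-}>0\}}\,dY_s + X^2$ into the correct jump integral for the max, and that Condition (4.a) makes the potentially problematic sign-changing jumps vanish (under (4.a) a jump cannot flip $Y$ from positive to negative or conversely, so $\mathbf{1}_{\{Y_{s-}>0\}}(Y_s)^- = 0$ for jump times and likewise $\mathbf{1}_{\{Y_{s-}\le 0, Y_{s-}\ne Y_s\}}(Y_s)^+$ is controlled). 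One subtlety worth stating explicitly is that $L^0(Y)$ could have jumps since $Y$ is a general semimartingale, but Tanaka's formula as quoted already accounts for this, so no separate argument is needed; the monotonicity of $t\mapsto L^0_t(Y)$ (which holds because $L^0$ is an increasing process) is all that the final equivalence requires. I would also remark that the identity $(X^1\vee X^2) = X^2 + Y^+$ lets one choose which of $X^1$, $X^2$ to add back, and symmetry in $X^1\leftrightarrow X^2$ together with $L^0(Y) = L^0(-Y)$ confirms the statement is well-posed.
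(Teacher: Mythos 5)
Your proposal is correct and follows essentially the same route as the paper: apply Tanaka's formula to $(X^1-X^2)^+$ (the paper symmetrically uses $(X^2-X^1)^+$), use Condition (4.a) to show that order is preserved across jumps so the two correction sums in Tanaka's formula vanish, and then reassemble the terms into the right-hand side of (\ref{eq1}) plus $\tfrac12 L^0_t(X^1-X^2)$. Your extra care with the bookkeeping at crossing times and at $X^1_{s-}=X^2_{s-}$ is sound and, if anything, more explicit than the paper's own argument.
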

\begin{proof}
Applying Tanaka's formula to $(X^2_t-X^1_t)^+$, we obtain
\begin{align}  \label{eq1234}
X_t^1 \vee X_t^2 =& X_t^1 + (X_t^2 - X_t^1)^{+}  \nonumber \\
=&  X_t^1 +\int_{0+}^t  \mathbf{1}_{\{X_{s-}^2 >X_{s-}^1\}} d(X^2 - X^1)_s + \sum\limits_{0 < s \le t}  \mathbf{1}_{\{X_{s-}^2 >X_{s-}^1\}} (X_s^2 - X_s^1) ^{-}  \nonumber\\
&+  \sum\limits_{0 < s \le t}  \mathbf{1}_{\{X_{s-}^2 \le X_{s-}^1\}} (X_s^2 - X_s^1) ^{+} +   \frac{1}{2}L_t^0(X^2 - X^1).
\end{align}
For $s\in D$, the collection of jumping times of $N$, we have
\[X_s^2 - X_s^1 =    X_{s-}^2 - X_{s-}^1+ g\left( X_{s-}^2, p_s\right) - g\left (X_{s-}^1,  p_s\right). \]
By the non-decreasing property of $x+ g(x,u)$ in $x$  from condition (4.a), we see that $X_{s-}^2 >X_{s-}^1$ implies $X^2_s-X^1_s\ge 0$, and hence,
\[
 \sum\limits_{0 < s \le t}  \mathbf{1}_{\{X_{s-}^2 >X_{s-}^1\}} (X_s^2 - X_s^1) ^{-}  =0.
\]
Similarly, $$\sum\limits_{0 < s \le t}  \mathbf{1}_{\{X_{s-}^2 \le X_{s-}^1\}} (X_s^2 - X_s^1) ^{+} = 0. $$
Substituting $X^i_t$, $i=1,\ 2$ on the RHS of (\ref{eq1234}) by their expressions given by SDE (\ref{eq1}), we arrive at
\begin{align*}
X_t^1 \vee X_t^2
=& X_0^1 \vee X_0^2 + \int_0^t \sigma(X^1_{s-} \vee X^2_{s-} ) dB_s + \int_0^t b(X^1_{s-} \vee X^2_s ) dB_s \\
&+ \int_{0+}^t \int_{U} g(X^1_{s-}  \vee  X^2_{s-}, u)  N(ds,du) \\
&+ \frac{1}{2}L_t^0(X^2 - X^1) .
\end{align*}
The conclusion of the proposition follows directly from the equation above.
\end{proof}

\begin{proposition} \label{prop1a}
Under Condition (4.a), if the weak uniqueness  holds for SDE (\ref{eq1}) and $L^0(X^1-X^2)\equiv 0$ for any pair  of solutions $(X^1, X^2)$ to (\ref{eq1})  with $X_0^1 = X_0^2$ a.s., then the pathwise uniqueness holds for (\ref{eq1}).
\end{proposition}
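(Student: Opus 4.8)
The plan is to carry out the usual Yamada--Watanabe-type argument, in the form adapted to the (LT) setting: combine the fact that the pointwise maximum of two solutions is again a solution (the previous proposition) with the weak uniqueness hypothesis to force two solutions driven by the same noise to coincide.

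In detail, let $(X^1,W,N)$ and $(X^2,W,N)$ be weak solutions of (\ref{eq1}) on a common probability space $(\Om,\cF,P)$, driven by the same Brownian motion $W$ and the same Poisson random measure $N$, and with $P(X_0^1=X_0^2)=1$. First I would apply the hypothesis to obtain $L^0(X^1-X^2)\equiv 0$ $P$-a.s. Since Condition (4.a) is in force throughout Section~4, the previous proposition then shows that $X^1\vee X^2$ is again a solution of (\ref{eq1}), on the same probability space and driven by the same pair $(W,N)$, with initial value $X_0^1\vee X_0^2=X_0^1$ a.s. Thus $X^1\vee X^2$ and $X^1$ are two solutions of (\ref{eq1}) sharing the initial distribution $\cL(X_0^1)$, so the weak uniqueness hypothesis gives $\cL(X^1\vee X^2)=\cL(X^1)$; in particular, for each fixed $t\ge 0$ the random variables $X_t^1\vee X_t^2$ and $X_t^1$ have the same law.

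Next I would invoke the elementary fact that if $Y\ge Z$ a.s.\ and $Y$ and $Z$ have the same distribution, then $Y=Z$ a.s.: for every $c\in\QQ$ one has $\{Z>c\}\subseteq\{Y>c\}$ and $P(Y>c)=P(Z>c)$, hence $P(\{Y>c\}\setminus\{Z>c\})=0$, and summing over $c\in\QQ$ gives $P(Y>Z)=0$ (no integrability is required). Taking $Y=X_t^1\vee X_t^2$ and $Z=X_t^1$ yields $X_t^2\le X_t^1$ a.s.\ for each fixed $t\ge 0$; by the symmetry of the construction in $X^1$ and $X^2$, also $X_t^1\le X_t^2$ a.s., so $X_t^1=X_t^2$ a.s.\ for each fixed $t\ge 0$. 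Finally, since $X^1$ and $X^2$ are both c\`adl\`ag, a countable union of the exceptional null sets (over $t\in\QQ\cap[0,\infty)$) together with right continuity of the paths gives $P(X_t^1=X_t^2\text{ for all }t\ge 0)=1$, which is precisely the pathwise uniqueness.

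I do not expect a serious obstacle here: the real work sits in the previous proposition and in the weak uniqueness proved in Section~3, both already available. The only points needing care are (i) checking that $X^1\vee X^2$ is a solution relative to the \emph{same} driving data $(W,N)$, so that the weak uniqueness may legitimately be applied by comparing it with $X^1$, and (ii) upgrading equality of the one-dimensional marginals at each fixed time to a.s.\ equality of the full paths, which uses only c\`adl\`ag regularity.
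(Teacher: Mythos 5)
Your proposal is correct and follows essentially the same route as the paper: use the hypothesis and the preceding proposition to see that $X^1\vee X^2$ is again a solution driven by the same $(W,N)$, invoke weak uniqueness to conclude $X^1\vee X^2$ and $X^1$ (and $X^2$) have the same law, deduce a.s.\ equality at each fixed time from the ordering, and upgrade to path equality by right continuity. Your tail-probability argument for ``same law plus $Y\ge Z$ implies $Y=Z$ a.s.'' is a slightly more careful rendering of the step the paper states tersely (it avoids any integrability assumption), but it is the same proof.
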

\begin{proof}
If $X^1$ and $X^2$ are any two solutions  and $L^0(X^1-X^2)=0$, then $X^1 \vee X^2$ is also a solution. Since the weak uniqueness holds,  we have that processes
$X^1, X^2$ and $X^1 \vee X^2$ have the same law.
 For any $t\geq 0$, $X^1_t \vee X^2_t - X^1_t$ is a non-negative random variable. We then have $X^1_t \vee X^2_t = X^1_t$  a.s. Similarly, we have $X^1_t \vee X^2_t = X^2_t$ a.s. Therefore, $X^2_t = X^1_t$ a.s.  Then $X^1=X^2$ a.s. by the right continuity of $X^1$ and $X^2$.
\end{proof}

The next lemma is crucial to verifying   condition (LT).

\begin{lemma}\label{lemma1}
Under conditions (2.a) and (4.a), if $X^1$ and $X^2$ are two
solutions of (\ref{eq1}), then $\mathbb{E} \left[L_t^a (X^1 - X^2 )\right]
\to \mathbb{E} \left[ L_t^0 (X^1 - X^2 )\right] $   as $ a \to 0$.
\end{lemma}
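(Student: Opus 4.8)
The plan is to express the local times $L^a_t(Y)$ for $Y := X^1 - X^2$ through Tanaka's formula and then pass to the limit $a \to 0$, controlling each term. Write $Y_t = Y_0 + \int_0^t d(X^1-X^2)_s$ using the SDE, so that $Y$ is a semimartingale with continuous martingale part $\int_0^t (\sigma(X^1_s) - \sigma(X^2_s))\, dB_s$, a finite-variation drift part, and jumps coming from the Poisson integral. By the occupation times formula, for a bounded continuous function $\varphi$ we have $\int_{\RR} \varphi(a) L^a_t(Y)\, da = \int_0^t \varphi(Y_s)\, d\langle Y^c\rangle_s = \int_0^t \varphi(Y_s) (\sigma(X^1_s) - \sigma(X^2_s))^2\, ds$. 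Taking expectations and using that $\sigma$ is bounded (condition (2.a)), the map $a \mapsto \EE L^a_t(Y)$ is integrable; the first step is to show it is in fact right-continuous (or continuous) at $0$ so that the limit statement makes sense in a strong form, but the statement as written only asks for convergence of $\EE L^a_t(Y)$ to $\EE L^0_t(Y)$ as $a \to 0$, so I would aim directly for that.

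The cleanest route is via the Tanaka–Meyer formula. For each $a$,
\[
(Y_t - a)^+ - (Y_0 - a)^+ = \int_{0+}^t \mathbf{1}_{\{Y_{s-} > a\}}\, dY_s + \sum_{0 < s \le t}\left(\mathbf{1}_{\{Y_{s-}>a\}}(Y_s - a)^- + \mathbf{1}_{\{Y_{s-}\le a\}}(Y_s-a)^+\right) + \tfrac12 L^a_t(Y),
\]
so that, taking expectations,
\[
\tfrac12 \EE L^a_t(Y) = \EE (Y_t-a)^+ - \EE(Y_0-a)^+ - \EE\!\int_0^t \mathbf{1}_{\{Y_{s-}>a\}}\, dZ_s - \EE\!\sum_{0<s\le t}\left(\mathbf{1}_{\{Y_{s-}>a\}}(Y_s-a)^- + \mathbf{1}_{\{Y_{s-}\le a\}}(Y_s-a)^+\right),
\]
where $dZ_s$ is the drift part (the martingale part of $Y$ integrates to zero in expectation). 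Now let $a \to 0$. The term $\EE(Y_t - a)^+ \to \EE Y_t^+$ by dominated convergence (using $\sup_{|a|\le 1}(Y_t-a)^+ \le |Y_t| + 1$ and the integrability from Lemma \ref{lem2.1}); likewise $\EE(Y_0-a)^+ \to \EE Y_0^+$. For the drift term, $\mathbf{1}_{\{Y_{s-}>a\}} \to \mathbf{1}_{\{Y_{s-}>0\}}$ for every $s$ with $Y_{s-}\ne 0$, and by Lemma \ref{level} (applied to $Y$, which one must check is covered — $Y$ is a difference of solutions and one uses that its continuous martingale part has a bracket bounded below only where $\sigma(X^1)\ne\sigma(X^2)$; alternatively one argues directly that $\{s: Y_{s-}=0\}$ has zero Lebesgue measure using that $X^1, X^2$ each spend zero time at any fixed level, which is not quite enough — see the obstacle below) the set of such exceptional times has measure zero, so dominated convergence (the integrand is bounded by $|b(X^1_s)| + |b(X^2_s)| + \int_U(|g(X^1_{s-},u)| + |g(X^2_{s-},u)|)\mu(du) \le$ const by (2.a)) gives convergence to $\EE\int_0^t \mathbf{1}_{\{Y_{s-}>0\}}\, dZ_s$. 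For the jump sums, note each summand is dominated by $|\Delta Y_s|$ and $\EE\sum_{0<s\le t}|\Delta Y_s|$ is finite by (2.a); moreover the $a$-dependence sits only in the indicators $\mathbf{1}_{\{Y_{s-}>a\}}$ and $\mathbf{1}_{\{Y_{s-}\le a\}}$, which converge pointwise for $Y_{s-}\ne 0$, so again dominated convergence applies and the sums converge to their $a=0$ counterparts. Assembling the pieces, $\EE L^a_t(Y) \to \EE L^0_t(Y)$.

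The main obstacle is justifying that the "boundary" contributions — the event $\{Y_{s-} = 0\}$ in the drift integral, and the corresponding event in the jump sums — are negligible in the limit, i.e. that no mass concentrates exactly at level $0$ as $a \downarrow 0$. For the Lebesgue integral $\int_0^t \mathbf{1}_{\{Y_{s-}>a\}}\, dZ_s$ this requires $\mathrm{Leb}\{s \le t : Y_s = 0\} = 0$ a.s.; this does not follow from Lemma \ref{level} applied to $X^1$ and $X^2$ separately, so one must either invoke Lemma \ref{level}-type reasoning for the semimartingale $Y$ directly — which is delicate because the continuous martingale part of $Y$ may degenerate on the set where $\sigma(X^1) = \sigma(X^2)$ — or, more robustly, use the occupation-times-formula identity $\int_{\RR}\EE L^a_t(Y)\,da = \EE\int_0^t(\sigma(X^1_s)-\sigma(X^2_s))^2\,ds < \infty$ together with monotonicity/continuity properties of $a \mapsto L^a_t$ (right-continuity in $a$ of $L^a_t$ for the chosen version, plus the fact that $L^{a}_t - L^{a-}_t = 2\int_0^t \mathbf{1}_{\{Y_{s-}=a\}}\,dZ^c_s$ type jump, which vanishes for all but countably many $a$) to pin down the limit. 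I would therefore organize the final step around showing $a \mapsto \EE L^a_t(Y)$ is right-continuous at $0$ using the explicit Tanaka decomposition above, being careful that every term's $a$-dependence is handled by dominated convergence with the uniform bounds supplied by condition (2.a) and the $L^2$ bound of Lemma \ref{lem2.1}, and treating the possibly-troublesome set $\{Y_{s-}=0\}$ by noting that the left limit $L^{0-}_t$ and $L^0_t$ differ only by a jump term that is itself dominated and handled in the same pass.
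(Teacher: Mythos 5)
Your decomposition is essentially the paper's: Tanaka's formula for $(Y_t-a)^{\pm}$ with $Y=X^1-X^2$, take expectations, and pass to the limit term by term by dominated convergence, with the finite‐variation and jump terms dominated via condition (2.a). The one place where you diverge is the ``main obstacle'' you flag, and it is worth being precise about it: for the one‐sided limit $a\downarrow 0$ there is no obstacle at all. With the right‐continuous convention for the indicators, $\mathbf{1}_{\{y>a\}}\to\mathbf{1}_{\{y>0\}}$ as $a\downarrow 0$ for \emph{every} real $y$, including $y=0$ (both sides equal $0$ there); equivalently, in the paper's notation the difference of indicators is $\mathbf{1}_{\{0<Y_{s}\le a\}}$, which tends to $0$ pointwise everywhere. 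So no statement of the form $\mathrm{Leb}\{s\le t: Y_s=0\}=0$ is needed, and none of the workarounds you sketch (invoking Lemma \ref{level} for $Y$, or the occupation‐time identity plus regularity of $a\mapsto L^a_t$) has to be carried out. The set $\{Y_{s-}=0\}$ would matter only for $a\uparrow 0$, where $\mathbf{1}_{\{y>a\}}\to\mathbf{1}_{\{y\ge 0\}}$ and $L^{0-}_t$ may genuinely differ from $L^0_t$; the paper simply restricts to $a\ge 0$ at the outset, which is all that Lemma \ref{lemma2} (and hence Theorem 4.2) requires, since its hypothesis is $\lim_{a\to 0+}$.

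Two smaller remarks. First, the paper uses condition (4.a) to identify the limiting jump terms as zero ($x\mapsto x+g(x,u)$ non‐decreasing forces $\mathbf{1}_{\{Y_{s-}\le 0\}}(Y_s)^+=0$); your domination of each summand by $|\Delta Y_s|$ together with pointwise convergence of the indicators already gives convergence of the jump sums to their $a=0$ counterparts, so (4.a) is a convenience here rather than a necessity for the convergence claim itself. Second, your appeal to Lemma \ref{lem2.1} for $\EE(Y_t-a)^+\to\EE Y_t^+$ is superfluous: $|(Y_t-a)^+-Y_t^+|\le |a|$ deterministically, which is exactly the paper's bound $|I_1^a-I_1^0|\le 2a$. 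With the observation in the first paragraph your argument closes and coincides with the paper's proof.
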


\begin{proof}
Without loss of generality, we take $a\geq 0$. Recall that $D$ is the set of jumping times.  By Tanaka's formula,
\begin{align}\label{eq1013a}
\frac{1}{2}L_t^a(X^1 - X^2) =&\( (X_t^1-X_t^2 - a)^{-} - (X_0^1-X_0^2 - a)^{-}\)\nonumber\\
& + \int_{0+}^t \mathbf{1}_{\{X_{s-}^1-X_{s-}^2 \le a\}}d(X_s^1-X_s^2) \nonumber\\
&- \sum\limits_{s\in(0,t]\cap D}\mathbf{1}_{\{X_{s-}^1-X_{s-}^2 > a\}}(X_s^1 - X_s^2 -a)^-\nonumber \\
&- \sum\limits_{s\in(0,t]\cap D}\mathbf{1}_{\{X^1_{s-}-X^2_{s-}\le a\}}(X_s^1 - X_s^2 -a)^+ \nonumber \\
=: & I_1^a +  I_2^a -  \sum\limits_{s\in(0,t]\cap D}\(I_3^a (s)+ I_4^a(s)\).
\end{align}

Since $|I_1^a - I_1^0 |\le 2a$,  then
$ \mathbb{E}(I_1^a - I_1^0) \to 0, $ as $  a \to 0  $. As $a\to 0$, it follows from the dominated convergence theorem (DCT) that
\begin{eqnarray*}
\EE\(I_2^a - I_2^0\)&=& \EE \int_0^t  \left(\mathbf{1}_{\{X_{s}^1-X_{s}^2 \le a\}} - \mathbf{1}_{\{X_{s}^1-X_{s}^2 \le 0\}}\right) \left(b(X_{s}^1)-b(X_{s}^2)\right) ds\\
&&+\EE \int_{0}^t   \int_{U} \left(\mathbf{1}_{\{X_{s}^1-X_{s}^2 \le a\}} - \mathbf{1}_{\{X_{s}^1-X_{s}^2 \le 0\}}\right)  \left(g(X_{s}^1, u)  - g(X_{s}^2, u)  \right) \mu(du)ds\\
&\to&0.
\end{eqnarray*}
Since $x+g(x,u)$ is non-decreasing in $x$, for $s\in D$, we have $\lim_{a\to 0+}I^a_4(s)=0=I^0_4(s)$ and
\begin{eqnarray}\label{eq333}
0\le I_4^{a}(s)& = &\mathbf{1}_{\{X^1_{s-}  \le X^2_{s-}\}} \left [ X_{s-}^1 - X_{s-}^2 -a+  g \left(X_{s-}^1, p_s\right) - g\left(X_{s-}^2, p_s\right)   \right]^+  \nonumber\\
&&+ \mathbf{1}_{\{X^2_{s-} < X^1_{s-} \le a + X_{s-}^2\}} \left [ X_{s-}^1 - X_{s-}^2 -a+ g \left(X_{s-}^1, p_s\right) - g \left(X_{s-}^2, p_s\right)  \right]^+  \nonumber \\
&\le&| g(X^1_{s-}, p_s)| +| g(X^2_{s-}, p_s)|=: h(s-).
\end{eqnarray}
Further, it is easy to see that
\[\EE\sum\limits_{s\in(0,t]\cap D}h(s-)<\infty.\]
By DCT again, we get
\[\lim_{a\to 0+}\EE \sum\limits_{s\in(0,t]\cap D}I^a_4(s)\to 0.\]
The same limit holds for $\EE \sum\limits_{s\in(0,t]\cap D}I^a_3(s)$.

The conclusion of the lemma follows by applying the estimates above to (\ref{eq1013a}).
\end{proof}

Let $\rho:\ \RR_+\to\RR_+$ be a Borel measurable function  such that $\int_{0+} da/\rho(a) = \infty$.

\begin{lemma} \label{lemma2}
Let $X$ be a semimartingale. For $\varepsilon >0$ and $t>0$ define
\[
 A_t^\varepsilon := \int_0^t \mathbf{1}_{\{0<X_s\le \varepsilon\}} \rho (X_s)^{-1} d [ X,X ]_s^c.
\]
If $ \mathbb{E} A_t^\varepsilon < \infty$ and $ \lim \limits_{a \to 0+} \mathbb{E} [L_t^a(X) ]  = \mathbb{E} [ L_t^0(X) ]$ for some $\varepsilon >0$ and all $t>0$,
then $L^0(X) = 0$ \, $\mathbb{P}$-a.s.
\end{lemma}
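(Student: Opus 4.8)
The plan is to use the occupation time formula together with the standing assumption $\int_{0+} da/\rho(a)=\infty$ to force $L^0_t(X)=0$. Recall the occupation times formula for the continuous part of the quadratic variation: for any nonnegative Borel function $\varphi$,
\[
\int_0^t \varphi(X_s)\, d[X,X]^c_s = \int_{\RR} \varphi(a)\, L^a_t(X)\, da \qquad \PP\text{-a.s.}
\]
Applying this with $\varphi(x) = \mathbf 1_{\{0<x\le\varepsilon\}}\rho(x)^{-1}$ and taking expectations gives
\[
\EE A_t^\varepsilon = \int_0^\varepsilon \rho(a)^{-1}\,\EE[L^a_t(X)]\, da .
\]
Since $\EE A_t^\varepsilon<\infty$ by hypothesis, the integrand $a\mapsto \rho(a)^{-1}\EE[L^a_t(X)]$ is integrable on $(0,\varepsilon)$.

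Now I argue by contradiction. Suppose $\PP(L^0_t(X)>0)>0$ for some $t>0$, so that $\EE[L^0_t(X)] =: c > 0$. By the second hypothesis, $\EE[L^a_t(X)]\to c$ as $a\to 0+$; hence there is $\delta\in(0,\varepsilon]$ such that $\EE[L^a_t(X)]\ge c/2$ for all $a\in(0,\delta)$. Then
\[
\EE A_t^\varepsilon \;\ge\; \int_0^\delta \rho(a)^{-1}\,\EE[L^a_t(X)]\,da \;\ge\; \frac c2 \int_0^\delta \frac{da}{\rho(a)} \;=\; +\infty,
\]
using $\int_{0+}da/\rho(a)=\infty$. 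This contradicts $\EE A_t^\varepsilon<\infty$. Therefore $\EE[L^0_t(X)]=0$, and since $L^0_t(X)\ge 0$ this gives $L^0_t(X)=0$ $\PP$-a.s. for each fixed $t$. Finally, $t\mapsto L^0_t(X)$ is nondecreasing (and right-continuous), so vanishing at a countable dense set of times forces it to vanish identically, i.e. $L^0(X)=0$ $\PP$-a.s.

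The only genuinely delicate point is the justification of the occupation times formula for $d[X,X]^c$ in the present jump-semimartingale setting and the interchange of expectation with the $da$-integral; both are standard (the latter is just Tonelli, since everything in sight is nonnegative), so I expect no serious obstacle — the argument is essentially a one-line application of Tonelli plus the divergence of $\int_{0+}da/\rho(a)$. One should also note that the hypothesis is stated for "some $\varepsilon>0$ and all $t>0$," which is exactly what the above uses: $\varepsilon$ is fixed, and the contradiction is derived at whichever $t$ realizes $\EE[L^0_t(X)]>0$.
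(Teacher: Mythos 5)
Your proof is correct and follows essentially the same route as the paper's: apply the occupation time formula to identify $A_t^\varepsilon=\int_0^\varepsilon \rho(a)^{-1}L_t^a(X)\,da$, then derive a contradiction from $\int_{0+}da/\rho(a)=\infty$ if $\EE[L_t^0(X)]>0$. You actually supply a bit more detail than the paper (the explicit $c/2$ lower bound and the monotonicity-in-$t$ step at the end), but the argument is the same.
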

\begin{proof}
Fix $t>0$. By the occupation time formula (see Corollary 1 on Page 216 of \cite{PE}), for any positive Borel function $f$, we have
\[
\int_{-\infty}^{\infty}  L_t^a f(a) da = \int_0^t f(X_{s-}) d[ X, X ]_s^c.
\]
Then
\[
A_t^\varepsilon = \int_0^{\varepsilon} \rho(a)^{-1} L_t^a(X)da.
\]
If $L_t^0(X)$ does not vanish a.s., then $\mathbb{E} L_t^0(X)$ is positive. Since $\mathbb{E} L_t^a(X)$ converges to $\mathbb{E} L_t^0(X)$ when $a$ decreases to 0,  and $\int_{0+}\frac{da}{\rho(a)}=\infty$, we have $\mathbb{E} A_t^{\varepsilon} = \infty$, which contradicts from the assumption.
\end{proof}
\begin{theorem}
Suppose that Condition (A) holds. The pathwise uniqueness holds for (\ref{eq1}) in each of the following two cases:
\begin{enumerate}[(1)]
\item Conditions (3.a,b) and (4.a) hold, and $|\sigma(x)-\sigma(y)|^2 \le \rho(|x-y|)$.

\item Conditions (3.a) and (4.a,b) hold, and $| \sigma(x) - \sigma (y) |^2 \le |f(x) - f(y)|$  for a non-decreasing and bounded function $f$.
\end{enumerate}
\end{theorem}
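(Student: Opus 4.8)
The plan is to obtain pathwise uniqueness, in both cases, from the weak uniqueness of Section~3 together with the (LT) condition, via Proposition~\ref{prop1a}. First I would check the weak uniqueness hypothesis of that proposition: in case~(1) the conditions (3.a,b) are assumed outright, and in case~(2) condition (3.a) is assumed while (3.b) follows from (4.b), which gives $0<\sigma_0\le|\sigma(x)|\le K$. Hence in both cases the weak uniqueness theorem of Section~3 applies, and by Proposition~\ref{prop1a} it remains only to verify the (LT) condition, i.e.\ $L^0(X^1-X^2)\equiv 0$ $\mathbb{P}$-a.s.\ for any two solutions $X^1,X^2$ of~(\ref{eq1}) with $X^1_0=X^2_0$ a.s.

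Fix such a pair and set $X:=X^1-X^2$, so $X_0=0$; then $X$ is a semimartingale whose only continuous local martingale part comes from the Brownian integrals, so $[X,X]^c_t=\int_0^t(\sigma(X^1_s)-\sigma(X^2_s))^2\,ds$. Lemma~\ref{lemma1} supplies $\mathbb{E}[L^a_t(X)]\to\mathbb{E}[L^0_t(X)]$ as $a\to0+$ for each $t>0$, so by Lemma~\ref{lemma2} the (LT) condition follows once we produce a Borel $\rho:\mathbb{R}_+\to\mathbb{R}_+$ with $\int_{0+}da/\rho(a)=\infty$ such that, for some $\varepsilon>0$ and all $t>0$,
\[
\mathbb{E}A^\varepsilon_t<\infty,\qquad
A^\varepsilon_t=\int_0^t\mathbf{1}_{\{0<X_s\le\varepsilon\}}\rho(X_s)^{-1}\,d[X,X]^c_s
=\int_0^t\mathbf{1}_{\{0<X^1_s-X^2_s\le\varepsilon\}}\,\frac{(\sigma(X^1_s)-\sigma(X^2_s))^2}{\rho(X^1_s-X^2_s)}\,ds .
\]

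In case~(1) I would simply take $\rho$ to be the function appearing in the hypothesis; then $\int_{0+}da/\rho(a)=\infty$, and on $\{0<X^1_s-X^2_s\le\varepsilon\}$ we have $(\sigma(X^1_s)-\sigma(X^2_s))^2\le\rho(|X^1_s-X^2_s|)=\rho(X^1_s-X^2_s)$, whence $A^\varepsilon_t\le\int_0^t\mathbf{1}_{\{0<X^1_s-X^2_s\le\varepsilon\}}\,ds\le t$ and $\mathbb{E}A^\varepsilon_t\le t<\infty$. This settles case~(1).

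Case~(2) is where the real work lies, and I expect it to be the main obstacle. Since $f$ is non-decreasing and bounded, on $\{0<X^1_s-X^2_s\le\varepsilon\}$ one has $(\sigma(X^1_s)-\sigma(X^2_s))^2\le f(X^1_s)-f(X^2_s)=\int_{(X^2_s,\,X^1_s]}df(y)$, with $df$ a finite measure. Inserting this into $A^\varepsilon_t$ and using Fubini, $\mathbb{E}A^\varepsilon_t$ turns into an integral over $y$ against $df$ of an expected $\rho$-weighted occupation time of $X^1$ (equivalently of $X^2$) in an $\varepsilon$-neighbourhood of the level $y$; the lower bound $d[X^i,X^i]^c_s\ge\sigma_0^2\,ds$ furnished by (4.b) lets me dominate each such occupation time, through the occupation time formula, by the local times $L^a_t(X^i)$, whose expectations are bounded uniformly for $a$ in a bounded range, while the countably many discontinuity points of $f$ (and of $\sigma,b,g$) are harmless by Lemma~\ref{level}. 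The delicate point is the choice of $\rho$: it must absorb the possibly rough behaviour of $f$ so that the $df$-weighted estimate converges, yet still satisfy $\int_{0+}da/\rho(a)=\infty$; a crude choice that controls $f$ only through its local modulus along $X^1$ and $X^2$ separately merely returns a constant multiple of $\int_{0+}da/\rho(a)=\infty$ and is useless, so $\rho$ must be genuinely tailored to $f$, the gain coming from the fact that ``$X^1$ lies just above $y$ while $X^2$ lies just below $y$'' is far rarer than either event alone --- which is exactly what the Gaussian lower bound (4.b) and the zero-occupation property of levels (Lemma~\ref{level}) make quantitative. Once $\mathbb{E}A^\varepsilon_t<\infty$ holds with such a $\rho$, Lemma~\ref{lemma2} yields $L^0(X^1-X^2)=0$, and pathwise uniqueness in both cases follows.
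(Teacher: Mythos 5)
Your reduction to Proposition \ref{prop1a} plus Lemmas \ref{lemma1} and \ref{lemma2} is exactly the paper's strategy, and your case (1) coincides with the paper's: take $\rho$ from the hypothesis, observe $d[X^1-X^2,X^1-X^2]^c_s=(\sigma(X^1_s)-\sigma(X^2_s))^2ds$, and bound $\mathbb{E}A^\varepsilon_t\le t$. That part is fine.

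Case (2), however, is not a proof: you correctly identify that everything hinges on producing a Borel $\rho$ with $\int_{0+}da/\rho(a)=\infty$ for which $\mathbb{E}A^\varepsilon_t<\infty$, but you never exhibit one, and you explicitly concede that your Fubini-against-$df$ scheme does not close (``$\rho$ must be genuinely tailored to $f$''). Indeed, after writing $(\sigma(X^1_s)-\sigma(X^2_s))^2\le\int_{(X^2_s,X^1_s]}df(y)$ and applying Fubini, the inner integrand still carries the singular factor $\rho(X^1_s-X^2_s)^{-1}$, which an occupation-time bound for $X^1$ or $X^2$ alone at level $y$ cannot absorb. The missing idea, which is the heart of the paper's argument, is that the \emph{simplest} choice $\rho(a)=a$ already works: insert a cutoff $\mathbf{1}_{\{X^1_s-X^2_s>\delta\}}$, smooth $f$ to $f_n(x)=\mathbb{E}f(x+\xi_n)$ (the discontinuity set of $f$ being Lebesgue-null for the occupation measure, by the ellipticity in (4.b)), and use the mean value theorem to write
\begin{equation*}
\frac{f_n(X^1_s)-f_n(X^2_s)}{X^1_s-X^2_s}=\int_0^1 f_n'\bigl(Z^v_s\bigr)\,dv,\qquad Z^v:=X^2+v(X^1-X^2),
\end{equation*}
so that the singular denominator cancels exactly. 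Since $Z^v$ is itself a semimartingale with continuous quadratic variation $d[Z^v,Z^v]^c_s=(\sigma^v_{s-})^2ds\ge\sigma_0^2\,ds$, the occupation time formula converts $\mathbb{E}\int_0^t f_n'(Z^v_s)\,ds$ into $\sigma_0^{-2}\int_{\mathbb{R}}f_n'(a)\,\mathbb{E}L^a_t(Z^v)\,da$, and Tanaka's formula gives $\sup_{a,v}\mathbb{E}L^a_t(Z^v)<\infty$; as $\int f_n'(a)\,da\le\|f\|$ by monotonicity and boundedness of $f$, the bound is uniform in $n$ and $\delta$, and letting $\delta\to0$ yields $\mathbb{E}A^\infty_t<\infty$ with $\rho(a)=a$. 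Without this interpolation-plus-occupation-time step your case (2) remains a genuine gap.
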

\begin{proof}
We adopt arguments similar to those in Theorem 3.5 of \cite{DM}.

\noindent
{\it Proof of (1).} Under conditions (3.a,b), the weak uniqueness holds. Let $X^1$ and $ X^2$ be two solutions to (1.1) with respect to the same Brownian Motion. Then
\begin{align*}
  X_t^1 - X_t^2  & =  X_0^1-X_0^2 + \int_0^t \left(b(X_{s-}^1)-b(X_{s-}^2) \right) ds \\
&+ \int_0^t  \left(\sigma(X_{s-}^1)-\sigma(X_{s-}^2)\right) dB_s  \\
&+\int_0^t \int_{U} \left(g(X_{s-}^1, u)  - g(X_{s-}^2, u)  \right) N(ds,du).
\end{align*}
Observe that
\begin{eqnarray}
&& \mathbb{E} \left[\int_0^t \rho(X_s^1 - X_s^2)^{-1} \mathbf{1}_{\{X_s^1>X_s^2\}} d[ X^1 - X^2, X^1 - X^2 ]^c_s \right] \nonumber  \\
&&=\mathbb{E} \left[ \int_0^t \rho(X_s^1 - X_s^2)^{-1} \left(\sigma(X_s^1)-\sigma(X_s^2)\right)^2 \mathbf{1}_{\{X_s^1>X_s^2\}} ds\right] \le t. \nonumber
\end{eqnarray}
By Lemmas \ref{lemma1} and \ref{lemma2}, we have $L^0(X^1 - X^2) = 0$. Hence, by Proposition \ref{prop1a}, the pathwise uniqueness holds.

\noindent
{\it Proof of (2).}  Under conditions (3.a) and (4.b), the weak uniqueness holds. To
prove the pathwise uniqueness, we need to prove that $L^0 (X^2 -
X^1) = 0$ a.s. For any $t>0$ we consider the $A^\varepsilon_t$ in Lemma 4.2 with $\varepsilon = \infty$, $\rho(x)
= x$ and $X_t = X_t^1 - X_t^2$. Choose a $\delta >0$, and consider
\begin{eqnarray}
&\mathbb{E}& \left[ \int_0^t (X_s^1 - X_s^2)^{-1} \mathbf{1}_{\{X_s^1- X_s^2>\delta\}}  d [X^1 - X^2, X^1 - X^2]^c_s \right]  \nonumber \\
&\le& \mathbb{E} \left[ \int_0^t \left (f(X_s^1) - f(X_s^2)\right ) (X_s^1-X_s^2)^{-1}  \mathbf{1}_{\{X_s^1- X_s^2>\delta\}} ds\right] =: K(f)_t.  \nonumber
\end{eqnarray}
Now we construct a sequence of smooth functions
\[
f_n(x) := \mathbb{E} f(x + \xi_n), \quad \xi_n \sim N(0, \frac{1}{n}), \,\, n=1,2,\ldots.
\]
to approximate $f$. For each $n$, $f_n$ is  bounded, increasing and differentiable. Denote
$$ D_f := \{x\in\mathbb{R} : f(x) \text{\, is discontinuous at \,} x\},$$
which  is a countable set. Then
$\lim \limits_{n \to \infty} f_n(x) = f(x), \forall x \notin D_f$. By the occupation times formula
\[
0 = \int_{-\infty}^{+\infty} L_t^a \mathbf{1}_{\{a \in D_f\}} da = \int_0^t\mathbf{1}_{\{X_{s-} \in D_f\}} d[X,X]_s^c
\]
so that
\[
0=\int_0^t\mathbf{1}_{\{X_{s-} \in D_f\}} \sigma^2(X_{s-})ds \ge \sigma_0^2 \int_0^t\mathbf{1}_{\{X_{s-} \in D_f\}}ds = \sigma_0^2 \text{Leb} \{s: X_{s-} \in D_f\}.
\]
Then $\text{Leb} \{s: X_{s-} \in D_f\} = 0$ a.s., and $\text{Leb} \{s: X_{s-} \in D_f \text{\, or \,} X_s \ne X_{s-}\} = 0$ a.s. Consequently,
\[
\lim_{n \to \infty} \left(f_n(X_s^1) - f_n(X_s^2) \right) = f(X_s^1) - f(X_s^2),\qquad a.s.,
\]
for almost all $s \le t$. It follows that $K(f)_t = \lim \limits_{n \to \infty} K(f_n)_t$.

For any $v \in [0,1]$, put $Z^v_t := X_t^2 + v(X_t^1-X_t^2)$. Then
\begin{align}
Z_t^v = Z_0^v +  \int_0^t b^v_{s-} ds+ \int_0^t \sigma^v_{s-}  dB_s
 +  \int_0^t \int_{U} g^v_{s-} \left( u \right) N(ds, du),
\end{align}
where
\[
b^v_{s-}:= vb(X_{s-}^1) + (1 - v) b (X_{s-}^2),
\]
\[
\sigma^v_{s-} := v\sigma (X_{s-}^1) + (1 - v) \sigma (X_{s-}^2),
\]
\[
g^v_{s-}(u):= v g (X_{s-}^1,u) + (1 - v) g (X_{s-}^2,u).
\]

Since  $[Z^v, Z^v]^c_t=\int_0^t (\sigma^v_{s-})^2ds $, it follows that

\begin{eqnarray} \label{eqK}
K(f_n)_t
&=& \mathbb{E} \left[ \int_0^t \left( \int_0^1 f'_n(Z_s^v) dv \right) \mathbf{1}_{(X_s^1- X_s^2>\delta)} ds \right]   \nonumber  \\
&=& \int_0^1 \mathbb{E} \left[ \int_0^t f'_n(Z_s^v) ds \right] dv  \nonumber  \\
&=& \int_0^1 \mathbb{E} \left[ \int_0^t f'_n(Z_s^v)(\sigma^v_{s-})^{-2} d[Z^v, Z^v]^c_s  \right] dv  \nonumber  \\
&\le& \frac{1}{\sigma_0^2}\int_0^1  \mathbb{E} \left[ \int_0^t f'_n(Z_s^v) d[Z^v, Z^v]^c_s  \right] dv  \nonumber  \\
&\le& \frac{1}{\sigma_0^2} \int_0^1 \mathbb{E} \left[ \int_{\mathbb{R}} f'_n(a) L_t^a (Z^v) da \right]dv.
\end{eqnarray}

Moreover, $|\sigma^v| \le K$ and $|b^v| \le K$. By  Tanaka's formula, we have
\begin{align}\label{eqjumps}
(Z_t^v - a)^{+} =& (Z_0^v - a)^{+} +\int_0^t \mathbf{1}_{\{Z_{s-}^v > a\}} \left (  b^v_{s-}ds +\sigma^v_{s-} dB_s + \int_{U} g^v_{s-}( u) N(ds,du)  \right) \nonumber \\
& + \sum\limits_{0<s\le t} \mathbf{1}_{\{Z_{s-}^v > a\}} (Z_s^v - a)^{-}
+  \sum\limits_{0<s\le t} \mathbf{1}_{\{Z_{s-}^v  \le a\}} (Z_t^v - a)^{+} +\frac{1}{2} L_t^a(Z^v ).
\end{align}
Therefore,
\begin{align}
\frac{1}{2} \mathbb{E} L_t^a(Z^v) \le \mathbb{E}(Z_t^v - a)^{+} + \mathbb{E}\int_0^t |b_{s-}^v| ds + \mathbb{E} \int_0^t  \int_{U} |g^v_{s-}(u)| N(ds, du) <\infty,
\end{align}
and hence,
\[
\sup \limits_{a, v} \mathbb{E} \left[ L_t^a(Z^v)\right] = C < \infty.
\]
It follows from (\ref{eqK}) that
\[
K(f_n)_t \le \sigma_0^{-2} C \sup_n \| f_n\|.
\]
Hence, $K(f)_t$ is bounded by a constant which does not depend on $\delta$. Taking $\delta\to 0$, we see that  $K(f)_t$ is  bounded. By Lemma \ref{lemma1} and \ref{lemma2}, we have $L^0(X^1 - X^2) = 0$ a.s. By Propositions 4.1 and 4.2, the pathwise uniqueness holds for (\ref{eq1}).
\end{proof}

	In summary,  we have proved the pathwise uniqueness for SDE (\ref{eq1}) with possibly discontinuous coefficients under conditions that listed at the beginning of the previous sections. However, due to the limitation of our method, the jump part of the solution needs to be of bounded variation. More precisely, we have used the continuity (in spatial variable) of the local time which holds when the jumps part is of finite variation only. We leave the case of  jump part with unbounded variation  as a challenging {\em open problem}.

\section{An application}
\setcounter{equation}{0}
\renewcommand{\theequation}{\thesection.\arabic{equation}}

A modern approach in ruin theory is to use a spectrally negative {\levy} process to describe the surplus of an insurance company/portfolio. In actuarial mathematics literature, these {\levy} processes with negative jumps are also called {\levy} insurance risk processes.

The following equation specifies the so-called refracted {\levy} process.
\begin{eqnarray}\label{eq23}
dU_t = - \delta \mathbf{1}_{\{U_t >b\}}dt + dX_t
\end{eqnarray}
where $X = \{X_t: t \ge 0\}$ is a spectrally negative {\levy} process with law $\mathbb{P}$ and $b, \delta \in \mathbb{R}$ such that the resulting process $U$ may visit the half line $(b, \infty)$ with positive probability.

Note that the equation (\ref{eq23}) is motivated by an application in actuarial mathematics. In fact, the surplus process  $X_t$  without dividend payments is given by
\[dX_t=\mu dt+\si dB_t+dJ_t\]
where $\mu$ is the average premium  rate, $B$ is a Brownian motion and $J_t$ is an independent  pure jump spectrally negative {\levy} process; the second term comes from uncertainty of premium collection or other random factors, i.e., the insured will pay the premium with certain probability, and a scaling limit leads to this term; $J_t$ is the accumulated claims up to time $t$. $\de$ is the rate of dividend, i.e., the insurance company will pay dividends when the surplus is higher than a certain level. To summarize, the equation (\ref{eq23}) can be rewritten as
\[dU_t=\((\mu-\de)\mathbf{1}_{U_t > b}+\mu \mathbf{1}_{U_t \le b}\)dt+\si dB_t+dJ_t.\]

 Kyprianou and Loeffen \cite{KL} investigated the ruin problem of (\ref{eq23}) by establishing a few identities for the  one and two sided exit problems, which are expressed in terms of the scale functions.
They proved that the refracted {\levy} process exists as the unique strong solution to (\ref{eq23}) whenever $X$ is a spectrally negative {\levy} process.

Note that the company with higher reserve has less risk. Therefore, we enrich the model by considering discontinuous diffusion coefficient. Namely, when the reserve process $X_t\ge q$ for a constant $q$, the volatility constant is $\si_1$ which is (usually) lower than the volatility constant $\si_2$ when $X_t<q$. We thus consider the following modified SDE:
\begin{eqnarray}\label{eq22}
dU_t = (\mu_1 \mathbf{1}_{U_t \ge p} + \mu_2 \mathbf{1}_{U_t < p})dt + (\sigma_1 \mathbf{1}_{U_t  \ge q} + \sigma_2 \mathbf{1}_{U_t  < q})dB_t + dJ_t,
\end{eqnarray}
where $p$, $q$, $\sigma_1$ and $\sigma_2$ are positive constants. Suppose that $N$ is a Poisson random measure on $\RR_+\times\RR_-$ with intensity measure $\mu$ on $\RR_-$ satisfying $\int_{\RR_-}u\mu(du)>-\infty$ and
\[J_t=\int^t_0\int_{\RR_-}uN(du ds).\]

Using the results of previous sections, we proceed to proving  pairwise uniqueness for SDE (\ref{eq22}). For simplicity of notation, we denote
\[
b(x) := \mu_1 \mathbf{1}_{x \ge p} + \mu_2 \mathbf{1}_{x < p} ,\quad  \sigma(x) := \sigma_1 \mathbf{1}_{x \ge q} + \sigma_2 \mathbf{1}_{x < q},\quad \text{and}  \quad g(x,u)=u .
\]
It is clear that functions $b(x)$ and  $\sigma(x)$ have at most countably many discontinuous points. It is easy to verify Conditions (3.a) and (4.a,b). To verify the last part of Condition (2) of Theorem 4.2, we define
\[
f(x)=(\sigma_1 - \sigma_2)^2 \mathbf{1}_{\{x > q\}}.
\]
Then
\[
|\sigma(x) - \sigma(y)|^2 \le |f(x) - f(y)|.
\]
Note that $f(x)$ is a bounded and increasing function, The pathwise uniqueness then follows from Theorem 4.2.

\end{document}